\documentclass[10pt, a4paper, reqno, oneside]{amsart}

\tolerance=1000
\hbadness=8000
\hfuzz=15.00pt
\vbadness=10000
\vfuzz=5.00pt

\usepackage{amsmath, amsfonts, amsthm, enumerate, multicol, scalefnt, relsize}

\usepackage[mathscr]{euscript}
\usepackage{mathbbol}
\usepackage[latin1]{inputenc}
\usepackage{graphicx}
\usepackage[all]{xy}
\usepackage{enumitem}
\usepackage{autobreak}
\setlist[itemize]{noitemsep, topsep=1pt, leftmargin=20pt}
\sloppy
\usepackage{xfrac}
\usepackage{todonotes}

\usepackage{fullpage}

\usepackage[hypertexnames=false,
backref=page,
    pdfpagemode=UseNone,
    breaklinks=true,
    extension=pdf,
    colorlinks=true,
    linkcolor=blue,
    citecolor=blue,
    urlcolor=blue,
]{hyperref}

\setcounter{tocdepth}{1}

\newcommand\bcdot{\ensuremath{
  \mathchoice
   {\mskip\thinmuskip\lower0.2ex\hbox{\scalebox{1.6}{$\cdot$}}\mskip\thinmuskip}}
   {\mskip\thinmuskip\lower0.2ex\hbox{\scalebox{1.6}{$\cdot$}}\mskip\thinmuskip}
   {\lower0.3ex\hbox{\scalebox{1.2}{$\cdot$}}}
   {\lower0.3ex\hbox{\scalebox{1.2}{$\cdot$}}}
}
\theoremstyle{plain}
\newtheorem{theo}{Theorem}[section]

\newtheorem{prop}[theo]{Proposition}

\theoremstyle{definition}
\newtheorem{rem}[theo]{Remark}

\theoremstyle{plain}
\newtheorem{lemma}[theo]{Lemma}
\newtheorem{theorem}[theo]{Theorem}

\theoremstyle{definition}

\newtheorem{remark}[theo]{Remark}

\theoremstyle{plain}
\newtheorem{thmint}{Theorem}

\renewcommand{\=}{:=}

\renewcommand{\a}{\alpha}
\renewcommand{\b}{\beta}

\renewcommand{\d}{\delta}
\newcommand{\e}{\varepsilon}

\newcommand{\g}{\gamma}
\newcommand{\h}{\eta}

\renewcommand{\l}{\lambda}
\newcommand{\w}{\omega}
\newcommand{\q}{\vartheta}
\newcommand{\s}{\sigma}

\newcommand{\D}{\Delta}

\newcommand{\G}{\Gamma}

\renewcommand{\L}{\Lambda}
\renewcommand{\S}{\Sigma}

\newcommand{\W}{\Omega}

\DeclareSymbolFontAlphabet{\mathbb}{AMSb}
\DeclareSymbolFontAlphabet{\mathbbl}{bbold}

\newcommand{\bC}{\mathbb{C}}

\newcommand{\bR}{\mathbb{R}}

\newcommand{\bN}{\mathbb{N}}

\newcommand\Sym{\mathrm{Sym}}
\newcommand\Skew{\mathrm{Skew}}

\newcommand{\cC}{\mathcal{C}}

\newcommand{\eL}{\EuScript{L}}
\newcommand{\eM}{\EuScript{M}}

\newcommand{\eT}{\EuScript{T}}
\newcommand{\eU}{\EuScript{U}}

\DeclareMathOperator\tr{tr}
\DeclareMathOperator\Tr{Tr}

\DeclareMathOperator\End{End}
\DeclareMathOperator\Hess{Hess}
\DeclareMathOperator\scal{scal}

\DeclareMathOperator\Id{Id}
\DeclareMathOperator\diff{d\!}

\newcommand{\p}{\partial}
\newcommand{\ti}{\mathtt{i}}
\newcommand{\Ric}{\operatorname{Ric}}

\newcommand{\ol}{\overline}
\newcommand{\zero}{\operatorname{o}}
\newcommand{\im}{\operatorname{Im}}
\newcommand{\n}{\nabla}

\makeatletter
\@namedef{subjclassname@2020}{\textup{2020} Mathematics Subject Classification}
\makeatother

\allowdisplaybreaks[3]

\title[]{On the linearization stability of\\
the Chern-scalar curvature}

\author{Daniele Angella}
\address[Daniele Angella]{Dipartimento di Matematica e Informatica ``Ulisse Dini''\\
Universit\`a degli Studi di Firenze,
viale Morgagni 67/a,
	50134 Firenze, Italy}
\email{daniele.angella@unifi.it}
\email{daniele.angella@gmail.com}

\author{Francesco Pediconi}
\address[Francesco Pediconi]{Dipartimento di Matematica e Informatica ``Ulisse Dini''\\
	Universit\`a degli Studi di Firenze,
	viale Morgagni 67/a,
	50134 Firenze, Italy}
\email{francesco.pediconi@unifi.it}

\subjclass[2020]{53C21, 53C55}
\keywords{Hermitian manifold, Chern connection, Chern scalar curvature, linearization stability}
\thanks{
The authors are supported by project PRIN2017 ``Real and Complex Manifolds: Topology, Geometry and holomorphic dynamics'' (code 2017JZ2SW5), and by GNSAGA of INdAM
} 

\begin{document}

\begin{abstract}
In this note, we study the local properties of the Chern-scalar curvature function by looking at its linearization. In particular, we study its linearization stability and the structure of the space of Hermitian metrics with prescribed Chern-scalar curvature.
\end{abstract}

\maketitle

\section{Introduction}

In Riemannian geometry, the space of Riemannian metrics $\eM$ attached to a differentiable manifold $M$ plays a crucial role as a differentiable invariant. From this point of view, the function $\scal$, assigning to any Riemannian metric $g$ its scalar curvature $\scal(g)$, is well understood. We refer e.g. to the seminal works \cite{kazdan-warner-AnnMath-1, kazdan-warner-JDG, kazdan-warner-AnnMath-2, kazdan-warner-InvMath, berardbergery}. For compact surfaces $S$, the image of this function depends on the Euler characteristic. Indeed, the necessary and sufficient conditions for a smooth function to be the Gaussian curvature of some metric are either to be positive somewhere when $\chi(S)>0$, or to change sign or to be identically zero when $\chi(S)=0$, or to be negative somewhere when $\chi(S)<0$, see \cite[Thm 6.3, Thm 11.8]{kazdan-warner-AnnMath-1}, \cite[Thm 5.6]{kazdan-warner-AnnMath-2}.
There is a similar trichotomy in higher dimension, where one makes advantage of the sign of the first eigenvalue of the conformal Laplacian operator, see \cite[Thm 6.4]{kazdan-warner-AnnMath-2}.
Two main ingredients in the work by Kazdan and Warner are the study of the local surjectivity of the map $g\mapsto \scal(g)$ by means of the Inverse Function Theorem \cite[Lemma 2, p 228]{kazdan-warner-InvMath}, and an Approximation Lemma for studying the $L^p$-closure of the orbits of a function under the action of the diffeomorphism group of $M$ \cite[p. 228]{kazdan-warner-InvMath}.
This note is born as an attempt to understand similar questions concerning the geometry of Hermitian metrics on a compact complex manifold.

In Hermitian geometry, the Levi-Civita connection is replaced by Hermitian connections with possibly non-zero torsion. Among Hermitian connections, there are some canonical choices (in the sense of \cite{gauduchon-bumi}), in particular, the {\em Chern connection} is uniquely characterized by having torsion of type $(2,0)$, equivalenty, by the $(0,1)$-component coinciding with the Cauchy-Riemann operator of the holomorphic tangent bundle, see e.g. \cite[p. 273]{gauduchon-bumi}.
The {\em Chern-scalar curvature} $\scal^{\rm Ch}$, obtained by tracing the curvature of such connection (see Section \ref{prel} for the definition), has been investigated by several authors. Hermitian metrics on compact complex manifolds with constant Chern-scalar curvature have been investigated and constructed in \cite{angella-calamai-spotti, koca-lejmi, angella-pediconi} and others. A first result on prescribing the Chern-scalar curvature appear in \cite[Thm 1.1]{ho}, and the problem has been recently addressed in \cite{fusi} by the conformal methods of Kazdan and Warner.

\smallskip

In this note, we exploit the techniques by \cite{fischer-marsden, kazdan-warner-InvMath, bourguignon, berardbergery} to study the relationship between infinitesimal and actual deformations of the Chern-scalar curvature function with respect to a varying metric.
Concerning the analogue results for the Riemannian scalar curvature, we refer in particular to \cite[Thm A, Thm A', Thm 7.9]{fischer-marsden}.

Let $(M,J)$ be a compact, connected, complex manifold of dimension $\dim_{\bR}M=2m$, and $\eM_{\rm H}$ the space of smooth Hermitian metrics on it.
We study the local properties of the function $\scal^{\rm Ch} : \eM_{\rm H} \to \cC^{\infty}(M,\bR)$ by looking at its linearization (see Proposition \ref{prop:first-variation-scal} and Proposition \ref{prop:second-variation-scal}).
Let us recall that $\scal^{\rm Ch}$ is said to be {\em linearization stable at a metric $g_{\zero}\in\eM_{\rm H}$} if, for any direction $h \in \ker(\scal^{\rm Ch})'_{g_{\zero}}$, there exists a smooth path $g : (-\epsilon,\epsilon) \to \eM_{\rm H}$ such that $g(0)=g_{\zero}$, $\dot{g}(0)=h$ and $\scal^{\rm Ch}(g(t))=\scal^{\rm Ch}(g_{\zero})$ for any $-\epsilon<t<\epsilon$. On the other hand, if $\scal^{Ch}$ is not linearization stable at $g_{\zero}$, it is said to be {\em linearization unstable at $g_{\zero}$}.
Our first result reads as follows:

\begin{thmint}\label{thm:a}
Let $g \in \eM_{\rm H}$, set $\l \= \scal^{\rm Ch}(g) \in \cC^{\infty}(M,\bR)$ and assume that either $g$ is not first-Chern-Einstein, or $g$ is first-Chern-Einstein with $\tfrac{\l}m<\diff^{\,*}\!\q$.
Then, the function $\scal^{\rm Ch} : \eM_{\rm H} \to \cC^{\infty}(M,\bR)$ is linearization stable at $g$ and maps any neighborhood of $g$ onto a neighborhood of $\l$.
\end{thmint}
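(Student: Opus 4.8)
\noindent\emph{Plan of proof.} The plan is to follow the scheme of \cite{fischer-marsden}: compute the linearization $L\=(\scal^{\rm Ch})'_g$ and its formal $L^2$-adjoint $L^*$, prove that $L^*$ is injective, deduce that $L$ is onto with splitting kernel, and finally feed this into the Inverse Function Theorem. Regard $\eM_{\rm H}$ as an open cone in the space $\G(S^{1,1}M)$ of $J$-invariant symmetric $2$-tensors, so that $T_g\eM_{\rm H}=\G(S^{1,1}M)$, and let $L\colon\G(S^{1,1}M)\to\cC^\infty(M,\bR)$ be the linearization of Proposition \ref{prop:first-variation-scal}. Since the Chern--Ricci form $\rho^{\rm Ch}$ depends on the metric only through $\log\det g$, this linearization will have the particularly simple shape
\begin{equation}\label{eq:lin}
L(h)=-\Delta^{\rm Ch}\big(\Lambda_\omega h\big)-\langle h,\rho^{\rm Ch}\rangle_\omega,
\end{equation}
with no double-divergence term, where $\Delta^{\rm Ch}=\Lambda_\omega\, i\,\partial\bar\partial$ is the Chern Laplacian; in particular its principal symbol is, up to a nonzero factor, $h\mapsto|\xi|^2\Lambda_\omega h$, which is fibrewise surjective onto $\bR$. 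Writing $L^*\colon\cC^\infty(M,\bR)\to\G(S^{1,1}M)$ for the formal $L^2$-adjoint of $L$, the composition $LL^*$ is then a fourth-order, self-adjoint, non-negative \emph{elliptic} operator with $\langle LL^*f,f\rangle_{L^2}=\|L^*f\|^2_{L^2}$, so $\ker(LL^*)=\ker L^*$; by standard elliptic theory, once $\ker L^*=\{0\}$ is established it follows that $LL^*$ is an isomorphism (from smooth functions to smooth functions, by elliptic regularity), hence $L$ is onto and $\G(S^{1,1}M)=\ker L\oplus\operatorname{Im}L^*$ with closed summands.

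The core step will be to prove $\ker L^*=\{0\}$ under the hypotheses. From \eqref{eq:lin} one computes $L^*f=-\big((\Delta^{\rm Ch})^*f\big)\omega-f\rho^{\rm Ch}$, where $(\Delta^{\rm Ch})^*$ is the formal adjoint of the Chern Laplacian. Suppose $L^*f=0$; taking the $\omega$-trace gives the scalar identity $m\,(\Delta^{\rm Ch})^*f=-\l f$, and substituting it back into $L^*f=0$ yields the pointwise identity $f\big(\rho^{\rm Ch}-\tfrac\l m\omega\big)=0$. If $g$ is \emph{not} first-Chern-Einstein, then $\rho^{\rm Ch}\neq\tfrac\l m\omega$ on a nonempty open set, on which $f$ must vanish; since $f$ solves the second-order elliptic equation $m\,(\Delta^{\rm Ch})^*f+\l f=0$, Aronszajn's unique continuation theorem forces $f\equiv0$. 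If instead $g$ is first-Chern-Einstein, the last identity is automatic and one is left with $m\,(\Delta^{\rm Ch})^*f=-\l f$; expanding $(\Delta^{\rm Ch})^*$ in terms of the Hodge Laplacian and the torsion $1$-form $\q$ recasts this as a Schr\"odinger-type equation whose zeroth-order potential is controlled by $\tfrac\l m-\diff^{\,*}\q$, and then pairing with $f$ and integrating by parts over the compact $M$ (integrating the first-order torsion term against $f$ contributes a multiple of $\int_M f^2\,\diff^{\,*}\q\,\vol$) makes the resulting quadratic form have a strict sign precisely when $\tfrac\l m<\diff^{\,*}\q$ --- equivalently, one may run the maximum principle directly on $m\,(\Delta^{\rm Ch})^*f+\l f=0$. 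In either case $f\equiv0$. I expect this to be the main obstacle: the sign of the torsion contribution must be tracked carefully so that the hypothesis $\tfrac\l m<\diff^{\,*}\q$ --- rather than some other normalization --- is exactly what closes the argument, and the non-K\"ahlerness of $g$ makes the relevant integration-by-parts (or Bochner-type) identities genuinely more involved than in \cite{fischer-marsden}.

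Granting $\ker L^*=\{0\}$, the remaining assertions follow from the Inverse Function Theorem on Sobolev completions together with elliptic regularity. The map $\Psi\colon f\mapsto\scal^{\rm Ch}(g+L^*f)$ has differential $LL^*$ at $f=0$, an isomorphism, so $\Psi$ is a local diffeomorphism near $0$; since $L^*f$ is $J$-invariant, $g+L^*f\in\eM_{\rm H}$ for $f$ small, and each solution of $\Psi(f)=\l'$ is smooth by regularity, $\scal^{\rm Ch}$ will carry every neighborhood of $g$ onto a neighborhood of $\l$. For linearization stability, fix $h\in\ker L$ and apply the Inverse Function Theorem to $(t,f)\mapsto\scal^{\rm Ch}(g+t\,h+L^*f)$, whose $f$-differential at $(0,0)$ is again $LL^*$: this produces a smooth family $t\mapsto f(t)$ with $f(0)=0$ and $\scal^{\rm Ch}(g+t\,h+L^*f(t))\equiv\l$ for $|t|$ small. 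Differentiating this identity at $t=0$ gives $L\big(h+L^*\dot f(0)\big)=0$, hence $LL^*\dot f(0)=-L(h)=0$ and so $\dot f(0)=0$; thus $g(t)\=g+t\,h+L^*f(t)$ --- a path of smooth Hermitian metrics, smooth in $t$ by the usual bootstrapping --- satisfies $g(0)=g$, $\dot g(0)=h$ and $\scal^{\rm Ch}(g(t))\equiv\l$, which is exactly linearization stability at $g$.
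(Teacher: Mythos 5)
Your proposal is correct and follows essentially the same route as the paper: show the formal adjoint $\g_g^*$ of the linearization is injective (via unique continuation in the non-first-Chern-Einstein case, and via the strong maximum principle on $\D_gu-g(\diff u,\q)+((\diff^{\,*}\!\q)-\tfrac{\l}{m})u=0$ in the first-Chern-Einstein case), deduce surjectivity and splitting of the linearization, and conclude by the Implicit Function Theorem plus elliptic regularity for $\g_g\g_g^*$. The only differences are cosmetic --- the paper invokes the Berger--Ebin splitting lemma and Cheeger--Naber--Valtorta where you use the $LL^*$-ellipticity argument and Aronszajn --- and your hedge in case (ii) is resolved exactly as you suspect: the paper runs the maximum principle directly (the integration-by-parts variant would produce a different constant in front of $\diff^{\,*}\!\q$).
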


In the statement of the theorem above, $\q$ denotes the torsion $1$-form of $g$ (see Equation \eqref{eq:def-lee}). Moreover, the {\em first-Chern-Einstein condition} is a generalization of the K\"ahler-Einstein equation in the Hermitian, possibly non-K\"ahlerian, setting (see Section \ref{prel}).
In particular, since $\q=0$ and $\scal^{\rm Ch}=\scal$ at any K\"ahler metric, we remark that this theorem applies to K\"ahler-Einstein metrics with negative scalar curvature.

Our second result concerns the structure of the space
$\eM_{\rm H}(\l) \= \{g \in \eM_{\rm H} : \scal^{\rm Ch}(g)=\l\}$
of Hermitian metrics with prescribed Chern-scalar curvature. More precisely, we prove

\begin{thmint}\label{thm:b}
Let $\l \in \cC^{\infty}(M,\bR)$ and assume that $\eM_{\rm H}(\l)$ is not empty. If either $(M,J)$ is non-K\"ahlerian and $c_1^{\mathsmaller{\rm BC}}(M,J) \neq 0$, or $(M,J)$ is K\"ahlerian and $c_1(M,J)$ has no sign, then $\eM_{\rm H}(\l)$ is a closed smooth ILH-submanifold of $\eM_{\rm H}$ with tangent space $T_g\eM_{\rm H}(\l) = \ker(\scal^{\rm Ch})'_g$ at $g \in \eM_{\rm H}$.
\end{thmint}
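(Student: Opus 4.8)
\noindent\emph{Proof strategy.} The plan is to realize $\eM_{\rm H}(\l)$ as a regular level set of the Chern--scalar curvature and to apply a submersion theorem in the ILH category, in the spirit of \cite{fischer-marsden, bourguignon, berardbergery}. Fix a large Sobolev exponent $s$ and let $\eM_{\rm H}^{s}$ be the Hilbert manifold of Hermitian metrics of class $H^{s}$, an open subset of the Hilbert space of $J$-invariant symmetric $H^{s}$-sections of $T^{*}M\otimes T^{*}M$; on it the map $\scal^{\rm Ch}\colon\eM_{\rm H}^{s}\to H^{s-2}(M,\bR)$ is a smooth map of Hilbert manifolds whose differential at $g$ is the operator $L_{g}\=(\scal^{\rm Ch})'_{g}$ of Proposition~\ref{prop:first-variation-scal}. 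Since $\eM_{\rm H}(\l)=(\scal^{\rm Ch})^{-1}(\l)$ and $\scal^{\rm Ch}$ is continuous, $\eM_{\rm H}(\l)$ is automatically closed in $\eM_{\rm H}$; the substance of the statement is that $\l$ is a \emph{regular value} of $\scal^{\rm Ch}$, that is, $L_{g}$ is surjective for every $g\in\eM_{\rm H}(\l)$ (its kernel being then automatically closed, hence complemented in the Hilbert space, and smooth by elliptic regularity). Granting this, the submersion theorem for Hilbert manifolds makes $(\scal^{\rm Ch})^{-1}(\l)\cap\eM_{\rm H}^{s}$ into a closed smooth submanifold with tangent space $\ker L_{g}$; running the argument for all large $s$ with one and the same smooth defining function $\l$, and passing to the inverse limit, yields the ILH-submanifold structure of $\eM_{\rm H}(\l)$ and the identification $T_{g}\eM_{\rm H}(\l)=\ker(\scal^{\rm Ch})'_{g}$.

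The surjectivity of $L_{g}$ is reduced to the injectivity of its formal $L^{2}$-adjoint. By Proposition~\ref{prop:first-variation-scal}, $L_{g}^{*}\colon\cC^{\infty}(M,\bR)\to\Gamma(T^{*}M\otimes T^{*}M)$ is a second-order differential operator with injective principal symbol, so $L_{g}L_{g}^{*}$ is a fourth-order, formally self-adjoint, elliptic operator on $\cC^{\infty}(M,\bR)$, hence Fredholm on the compact $M$, with $\operatorname{coker}(L_{g}L_{g}^{*})=\ker(L_{g}L_{g}^{*})=\ker L_{g}^{*}$ --- the last equality because $L_{g}L_{g}^{*}f=0$ forces $\|L_{g}^{*}f\|_{L^{2}}^{2}=\langle L_{g}L_{g}^{*}f,f\rangle_{L^{2}}=0$. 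Thus, as soon as $\ker L_{g}^{*}=\{0\}$, the operator $L_{g}L_{g}^{*}$ is onto, hence so is $L_{g}$. Consequently Theorem~\ref{thm:b} reduces to the assertion that, under its hypotheses, $\ker L_{g}^{*}=\{0\}$ for \emph{every} $g\in\eM_{\rm H}$.

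To prove this, suppose $f\in\cC^{\infty}(M,\bR)$ is not identically zero and $L_{g}^{*}f=0$. Decompose this equation of symmetric $2$-tensors into its trace-free Hermitian part, its conformal component along $\w_{g}$, and its anti-Hermitian $(2,0)+(0,2)$ part, using the explicit dependence of $L_{g}$ on $h$, on $\n h$ and on the torsion $1$-form $\q$ recorded in Proposition~\ref{prop:first-variation-scal}. The vanishing of the trace-free Hermitian and anti-Hermitian parts is an overdetermined system which forces $g$ to be first-Chern-Einstein, $\rho^{\rm Ch}_{g}=\tfrac1m\scal^{\rm Ch}(g)\,\w_{g}$, and imposes further constraints on $f$, while the conformal component reduces to a linear second-order elliptic equation on $f$ of static type; an integration-by-parts and maximum-principle analysis of the latter on the compact $M$ pins down the sign behaviour of $\scal^{\rm Ch}(g)$, in the model situation forcing $\scal^{\rm Ch}(g)\equiv0$. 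Now $\rho^{\rm Ch}_{g}=\tfrac1m\scal^{\rm Ch}(g)\,\w_{g}$ is a $d$-closed real $(1,1)$-form representing $2\pi\,c_{1}^{\mathsmaller{\rm BC}}(M,J)$ in Bott--Chern cohomology and $2\pi\,c_{1}(M,J)$ in de Rham cohomology. If $\scal^{\rm Ch}(g)\equiv 0$, then $c_{1}^{\mathsmaller{\rm BC}}(M,J)=0$, which contradicts the non-K\"ahlerian hypothesis and, in the K\"ahlerian case, forces $c_{1}(M,J)=0$, against the assumption that $c_{1}(M,J)$ has no sign; and if $\scal^{\rm Ch}(g)$ is nonzero and of one strict sign, then $\pm\rho^{\rm Ch}_{g}$ is a K\"ahler form, which again contradicts non-K\"ahlerianity and, in the K\"ahlerian case, places $\pm c_{1}(M,J)$ in the K\"ahler cone. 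The intermediate cases are excluded by comparing the sign information coming from the static equation with Gauduchon's computation of the degree $\deg_{\w}K_{M}^{-1}$ over the conformal class of $g$. In all cases we reach a contradiction, so no such $f$ exists, completing the proof.

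The heart of the matter, and the main obstacle, is the last paragraph: first, showing that a nonzero element of $\ker L_{g}^{*}$ genuinely forces the first-Chern-Einstein condition together with the sign information on $\scal^{\rm Ch}(g)$ --- here the torsion terms in the first variation, which have no counterpart in the Riemannian theory of \cite{fischer-marsden, bourguignon}, must be exploited; and second, ruling out such metrics through the positivity of the Chern--Ricci form and Gauduchon's results on the total Chern--scalar curvature of a conformal class. The remaining ingredients --- the Sobolev/ILH bookkeeping and the elliptic Fredholm step --- are routine and follow the pattern of \cite{fischer-marsden, bourguignon, berardbergery}.
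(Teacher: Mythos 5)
Your overall strategy coincides with the paper's at the top level: reduce the submersion property of $\scal^{\rm Ch}$ at every $g\in\eM_{\rm H}(\l)$ to the injectivity of the $L^2$-adjoint $\g_g^*$ of the linearization, conclude by elliptic splitting and the Implicit Function Theorem on Sobolev completions, and pass to the ILH limit. The Fredholm/splitting step and the Sobolev bookkeeping are fine. The gaps are in the two substantive steps. First, the implication that $\g_g^*(u)=0$ with $u\not\equiv0$ forces $g$ to be first-Chern-Einstein is not an algebraic consequence of an ``overdetermined system''. By \eqref{eq:gamma*} one has $\g_g^*(u)=\tfrac12\big(\D_gu-g(\diff u,\q)+(\diff^{\,*}\!\q)\,u\big)\Id-u\,S(g)$; taking a trace gives a scalar second-order elliptic equation for $u$, and substituting back the equation reads $u\cdot\big(\tfrac{\phi}{2m}\Id-S(g)\big)=0$ for a suitable function $\phi$, which determines $S(g)$ only where $u\neq0$. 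One must know that $\{u\neq0\}$ is dense, which the paper obtains from the unique-continuation result of Cheeger--Naber--Valtorta; your sketch does not address the possibility that $u$ vanishes on an open set. Note also that $\g_g^*$ contains no Hessian term and takes values in $\Sym^{1,1}(TM)$, so there is no anti-Hermitian $(2,0)+(0,2)$ component to decompose, and the analogy with the Riemannian static equation of Fischer--Marsden is misleading here.

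Second, and more seriously, the final contradiction is not established. The paper's route is to invoke \cite[Thm 5]{angella-calamai-spotti-2}: under either topological hypothesis, $(M,J)$ admits no first-Chern-Einstein metric at all, so every $g\in\eM_{\rm H}(\l)$ falls under case (i) of Proposition \ref{prop:submersion} and $\g_g^*$ is injective. You instead attempt to re-derive this exclusion by a sign analysis of $\scal^{\rm Ch}(g)$ combined with Gauduchon's degree, but the assertions that the conformal component ``pins down the sign behaviour \dots\ in the model situation forcing $\scal^{\rm Ch}(g)\equiv0$'' and that ``the intermediate cases are excluded by comparing\dots'' are not arguments: for a first-Chern-Einstein metric the function $\scal^{\rm Ch}(g)$ need not have a sign a priori, and the case where it changes sign --- precisely the delicate one when $c_1(M,J)$ has no sign --- is left untreated. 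Either quote the exclusion result directly, or supply the missing analysis; as written the proof is incomplete at its crux.
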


Here, we say that $(M,J)$ is K\"ahlerian if it admits K\"ahler metrics, non-K\"ahlerian otherwise. We also denoted by $c_1^{\mathsmaller{\rm BC}}(M,J)$ and $c_1(M,J)$ the first Chern class of $(M,J)$ in the Bott-Chern and de Rham cohomology, respectively. Moreover, for the notion of infinite-dimensional ILH-manifold, we refer to \cite[Ch II]{omori}.
Notice that both the hypotheses stated in Theorem \ref{thm:b} assure that the manifold $(M,J)$ does not admit any first-Chern-Einstein metric, which is a key point in the proof.
Let us stress also that, while in the Riemannian case the prescribed scalar curvature problem is well understood (see \cite[Thm 4.35]{besse}), the question whether $\eM_{\rm H}(\l)$ is non-empty is far from being answered. Therefore, we collect in Remark \ref{rmk:non-empty} the state of the art, up to our knowledge.

Finally, in our last result, we exhibit concrete examples of Hermitian metrics at which $\scal^{\rm Ch}$ is linearization unstable. Notice that, by Theorem \ref{thm:a}, any such metric $g$ is necessarily first-Chern-Einstein with $\frac{1}{m}\scal^{\rm Ch}(g)-\diff^{\,*}\!\q\geq0$ at some point. More precisely, we prove

\begin{thmint}\label{thm:c}
Let $g_{\zero}$ be a K\"ahler-Einstein metric with positive scalar curvature. If $(M,g_{\zero})$ admits global non-trivial Killing vector fields, then $\scal^{\rm Ch}$ is linearization unstable at $g_{\zero}$.
\end{thmint}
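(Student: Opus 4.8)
The plan is to exploit the second-order obstruction to linearization stability, following the Fischer--Marsden strategy adapted to the Chern setting. If $\scal^{\rm Ch}$ were linearization stable at $g_{\zero}$, then for every $h\in\ker(\scal^{\rm Ch})'_{g_{\zero}}$ there would be a path $g(t)$ with $g(0)=g_{\zero}$, $\dot g(0)=h$, and $\scal^{\rm Ch}(g(t))\equiv\scal^{\rm Ch}(g_{\zero})$. Differentiating this identity twice at $t=0$ yields the necessary condition
\[
(\scal^{\rm Ch})'_{g_{\zero}}(\ddot g(0)) + (\scal^{\rm Ch})''_{g_{\zero}}(h,h) = 0 .
\]
Pairing with a function $u\in\cC^{\infty}(M,\bR)$ in the $L^2$ product induced by $g_{\zero}$ and integrating, the first term becomes $\langle\ddot g(0),\,((\scal^{\rm Ch})'_{g_{\zero}})^{\!*}u\rangle$, which vanishes for every choice of $\ddot g(0)$ precisely when $u\in\ker((\scal^{\rm Ch})'_{g_{\zero}})^{\!*}$. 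Since $g_{\zero}$ is first-Chern-Einstein (being K\"ahler-Einstein), the formal adjoint of the linearized Chern-scalar curvature has a non-trivial kernel — this is exactly the degeneracy that Theorem~\ref{thm:a} identifies as the potential source of instability, and here $\q=0$ so the threshold condition $\tfrac{\l}{m}<\diff^{\,*}\!\q$ fails. Thus linearization stability would force the integrated obstruction
\[
\int_M u\,(\scal^{\rm Ch})''_{g_{\zero}}(h,h)\,\vol_{g_{\zero}} = 0
\qquad\text{for all } u\in\ker\big(((\scal^{\rm Ch})'_{g_{\zero}})^{\!*}\big),\ h\in\ker(\scal^{\rm Ch})'_{g_{\zero}} .
\]

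The second step is to produce a specific pair $(u,h)$ violating this identity. Here the Killing field $X$ enters: let $h\=\cL_X g_{\zero}$ — no, rather take $h$ built from $X$ in a way that lies in $\ker(\scal^{\rm Ch})'_{g_{\zero}}$ but is \emph{not} tangent to the diffeomorphism orbit in the relevant sense. Concretely, on a K\"ahler--Einstein manifold with $\scal>0$, a non-trivial Killing field $X$ generates a non-trivial holomorphic vector field (by Matsushima/Lichnerowicz-type arguments), and one can form the symmetric $2$-tensor $h$ associated to $X$ (for instance via $h = X^\flat\otimes X^\flat$ traced appropriately, or via the Hessian of the Killing potential) so that $(\scal^{\rm Ch})'_{g_{\zero}}(h)=0$. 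Because $\scal^{\rm Ch}=\scal$ at $g_{\zero}$, the first variation $(\scal^{\rm Ch})'_{g_{\zero}}$ agrees with the Riemannian one, $(\scal)'_{g_{\zero}}(h)=-\Delta(\tr h)+\diff^{\,*}\diff^{\,*}h-\langle h,\Ric\rangle$, so the computation reduces to known K\"ahler--Einstein identities; the kernel element $u$ can be taken constant (since $\Delta^* $ on functions with $g_{\zero}$ Einstein of positive scalar curvature has the constants, adjusted by the Obata-type operator, in a controlled kernel). One then evaluates $\int_M u\,(\scal^{\rm Ch})''_{g_{\zero}}(h,h)\,\vol_{g_{\zero}}$ using Proposition~\ref{prop:second-variation-scal} and checks it is non-zero.

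The main obstacle is the genuine computation of the second variation $(\scal^{\rm Ch})''_{g_{\zero}}(h,h)$ and its integral against $u$: unlike the Riemannian scalar curvature, $\scal^{\rm Ch}$ involves the torsion $1$-form $\q$, whose first variation at $g_{\zero}$ (where $\q=0$) is generally non-zero, so quadratic terms $\diff^{\,*}(\q')$-type contributions survive in $(\scal^{\rm Ch})''$ even though they are absent from $(\scal)''$. I would organize this by decomposing $(\scal^{\rm Ch})''_{g_{\zero}} = (\scal)''_{g_{\zero}} + Q(h,h)$, where $Q$ collects the purely Chern (torsion-originated) contributions, handle the $(\scal)''$ part by quoting the Fischer--Marsden computation on Einstein manifolds (where the integrated obstruction is the classical one, non-vanishing exactly when Killing fields exist and $\scal>0$), and then argue that the extra term $\int_M u\,Q(h,h)\,\vol_{g_{\zero}}$ either vanishes (because $h$ is chosen $J$-invariant / comes from a holomorphic field so the varied torsion is controlled) or has the same sign, so it cannot cancel the main term. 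Verifying this sign/vanishing of the torsion correction is the delicate point; a clean way is to choose $h$ so that the induced infinitesimal variation of the complex structure is trivial and the variation of $\q$ is exact, making $\int_M u\,\diff^{\,*}(\cdots)\,\vol = 0$ when $u$ is constant.
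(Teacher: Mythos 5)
Your first step --- the integrated second-order obstruction $\int_M u\,(\scal^{\rm Ch})''_{g_{\zero}}(h,h)\,\nu_{g_{\zero}}=0$ for $u\in\ker(\g_{g_{\zero}}^*)$ and $h\in\ker(\g_{g_{\zero}})$ --- is exactly the paper's starting point (its Lemma 4.5, quoted from Fischer--Marsden). But the second step contains a genuine error that breaks the argument: you propose to take $u$ \emph{constant}. At a K\"ahler--Einstein metric with $\scal(g_{\zero})=\l_{\zero}>0$ the adjoint is $\g_{g_{\zero}}^*(u)=\tfrac12\big(\D_{g_{\zero}}u-\tfrac{\l_{\zero}}{m}u\big)\Id$, so a nonzero constant gives $\g_{g_{\zero}}^*(c)=-\tfrac{\l_{\zero}c}{2m}\Id\neq 0$; constants are \emph{not} in the kernel. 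You have also inverted the roles of the hypothesis: the Killing fields are not there to manufacture $h$ (your candidates $\cL_Xg_{\zero}$, $X^\flat\otimes X^\flat$, Hessian of a potential are either identically zero or not shown to lie in $\ker\g_{g_{\zero}}$). Rather, by the Lichnerowicz--Matsushima-type correspondence (\cite[p.~96]{kobayashi-transf}) nontrivial Killing fields on such a manifold are equivalent to nontrivial solutions of $\D_{g_{\zero}}u=\tfrac{\l_{\zero}}{m}u$, i.e.\ to $\ker(\g_{g_{\zero}}^*)\neq\{0\}$; this is the \emph{only} place the Killing hypothesis is used. The tensor $h$ can then be taken to be any trace-free section of $\Sym^{1,1}(TM)$: since $\g_{g_{\zero}}(h)=\tfrac12(\D_{g_{\zero}}-\tfrac{\l_{\zero}}{m})(\tr^{\bR}h)$, every trace-free $h$ is automatically in the kernel, and by \eqref{slin2K} the second variation collapses to $(\scal^{\rm Ch})''_{g_{\zero}}(h,h)=\tfrac{\l_{\zero}}{2m}|h|^2_{g_{\zero}}\geq 0$. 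The eigenfunction $u$ has zero mean, hence is positive on some ball; multiplying $h$ by a cutoff supported there (which preserves trace-freeness) makes $\int_M u\,|h|^2\,\nu_{g_{\zero}}>0$, the desired contradiction.

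A secondary but real problem is your intended reduction to the Riemannian Fischer--Marsden computation. Although $\scal^{\rm Ch}(g_{\zero})=\scal(g_{\zero})$ at a K\"ahler metric, the two functionals differ on nearby non-K\"ahler Hermitian metrics, so their linearizations do not agree: \eqref{slin} has no $\d_g\d_g h$ term, unlike the Riemannian first variation you quote, and consequently $\ker(\g_{g_{\zero}})$ and the second variation are quite different objects. Your worry about torsion corrections $Q(h,h)$ is legitimate in principle, but it is already settled by Proposition \ref{prop:second-variation-scal}: formula \eqref{slin2K} is the exact second variation at a K\"ahler metric, and no separate sign analysis of a torsion remainder is needed. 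As written, your proposal neither identifies a valid nonzero $u$ nor completes the non-vanishing of the obstruction, so it does not yet constitute a proof.
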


In particular, this theorem applies to the K\"ahler-Einstein metrics on compact Fano manifolds admitting an isometric Lie group action.
On the other hand, we do not have any information on the general class of first-Chern-Einstein metrics with $\frac{1}{m}\scal^{\rm Ch}-\diff^{\,*}\!\q\geq0$ at some point, which includes Ricci-flat K\"ahler metrics and K\"ahler-Einstein metrics with positive scalar curvature without global non-trivial Killing vector fields.

\medskip

The paper is organized as follows.
In Section \ref{prel}, we summarize some basic facts on Geometric Analysis, on Complex Linear Algebra and on the Chern connection. In Section \ref{sec:variation}, we compute the first and second variation formulas for the Chern-scalar curvature, proving Proposition \ref{prop:first-variation-scal} and Proposition \ref{prop:second-variation-scal}.
In Section \ref{sec:main}, we prove the main results Theorem \ref{thm:a}, Theorem \ref{thm:b} and Theorem \ref{thm:c}.

\bigskip

\noindent {\itshape Acknowledgements.} The authors are warmly grateful to Matteo Focardi and Fabio Podest\`a for useful discussions.

\section{Preliminaries and notation} \label{prel}
\setcounter{equation} 0

In this section, we summarize some basic facts on geometric analysis on Riemannian manifolds, referring to e.g. \cite{palais, Hebey, aubin, besse}, and we set the notation and some preliminary results concerning the geometry of Hermitian metrics, see e.g. \cite{gauduchon-bumi}.

\subsection{Basics on geometric analysis}

\subsubsection{A consequence of the Implicit Function Theorem} \hfill \par

Let $X,Y$ be Banach spaces, $\eU \subset X$ an open set and $L(X,Y)$ the Banach space of continuous linear maps $T: X \to Y$. A map $f: \eU \to Y$ is said to be {\it of class $\cC^1$} if there exists a continuous map $\diff f: \eU \to L(X,Y)
$ called {\it differential of $f$} such that
$$
\diff f(x)(v) = \lim_{t \to 0} \tfrac1t(f(x+tv)-f(x)) \quad \text{ for any } x \in \eU \, , \,\, v \in X \,\, .
$$
Let us consider the Banach space $L^{(k)}(X,Y)$, $k \in \bN$, given by
$$\begin{gathered}
L^{(k)}(X,Y) \= \big\{ \text{$k$-multilinear continuous maps } T: \underbrace{X{\times}{\dots}{\times}X}_{\mathsmaller{k\text{-times}}} \to Y\big\} \,\, , \\
\|T\|_{L^{(k)}(X,Y)} \= \sup \big\{|T(x_1,{\dots},x_k)|_Y : x_1,{\dots},x_k \in X \, , \,\, |x_1|_X={\dots}=|x_k|_X=1 \big\} \,\, .
\end{gathered}$$
It can be directly checked that $L(X,L^{(k-1)}(X,Y)) \simeq L^{(k)}(X,Y)$ for any $k \in \bN$. Therefore, this allows to give the following recursive definition: a map $f: \eU \to Y$ is said to be {\it of class $\cC^k$} if it is of class $\cC^{k-1}$ and there exists a continuous map $\diff^{\,(k)}\!f: \eU \to L^{(k)}(X,Y)$ such that $\diff^{\,(k)}\!f(x) = \diff\big(\diff^{\,(k-1)}\!f\big)(x)$ for any $x \in \eU$. As usual, $f$ is said to be {\it smooth} if it is of class $\cC^k$ for any $k \in \bN$. For the sake of shortness, we set $f'_x(v) \= \diff f(x)(v)$ and $f''_x(v) \=\diff^{\,(2)}\!f(x)(v)$. \smallskip

Let now $f: \eU \to Y$ be a smooth map and $x_{\zero} \in \eU$ a point. We recall that $f$ is said to be a {\it submersion at $x_{\zero}$} if $f'_{x_{\zero}}$ is surjective and the exact short sequence $\{0\} \to \ker(f'_{x_{\zero}}) \to X \to Y \to \{0\}$ splits, i.e. there exists a closed subspace $Z_{x_{\zero}} \subset X$ such that $X = \ker(f'_{x_{\zero}}) \oplus Z_{x_{\zero}}$ and the restriction $f'_{x_{\zero}}|_{Z_{x_{\zero}}}$ is an isomorphism of Banach spaces from $Z_{x_{\zero}}$ to $Y$. By the Implicit Function Theorem, see e.g. \cite[p. 72]{aubin}, if $f$ is submersion at $x_{\zero}$, it follows that:
\begin{itemize}
\item[i)] $f$ is {\it locally surjective at $x_{\zero}$}, i.e. $f$ maps any neighborhood of $x_{\zero}$ in $X$ onto a neighborhood of $f(x_{\zero})$ in $Y$;
\item[ii)] the preimage $S_{f(x_{\zero})} \= f^{-1}(f(x_{\zero}))$ is a smooth submanifold of $X$ in a neighborhood of $x_{\zero}$ with tangent space $T_{x_{\zero}}S = \ker(f'_{x_{\zero}})$.
\end{itemize}
Finally, we recall that $f: \eU \to Y$ is said to be {\it linearization stable at $x_{\zero} $} if, for any $v \in \ker(f'_{x_{\zero}}) \subset X$, there exists a smooth path $x : (-\epsilon,\epsilon) \to X$ such that $x(0)=x_{\zero}$, $\dot{x}(0)=v$ and $f(x(t))=f(x_{\zero})$ for any $-\epsilon<t<\epsilon$, see \cite[p. 519]{fischer-marsden}. Notice that, if $f$ is submersion at $x_{\zero}$, then it is also linearization stable at $x_{\zero}$, but the converse assertion is not true.

\subsubsection{Sobolev spaces on Riemannian manifolds} \hfill \par

Let $M$ be a connected, compact, oriented smooth manifold without boundary of even dimension $\dim M = 2m$, $g$ a fixed background Riemannian metric on $M$ and $\nu_g$ the induced Riemannian volume form. We extend $g$ to a Riemannian metric on the fibers of the bundle $\eT^{(r,s)} M \to M$ of $(r,s)$-tensors over $M$ in the usual way and we denote by $D^{g} : \cC^{\infty}(M,\eT^{(r,s)} M) \to \cC^{\infty}(M,\eT^{(r,s+1)} M)$ the Levi-Civita connection of $g$. \smallskip

Let $E \to M$ be any vector subbundle of $\eT^{(\cdot,\cdot\cdot)}M$. For any integer $k \geq 0$ and for any tensor fields $h_1,h_2 \in \cC^{\infty}(M,E)$, we define the bilinear form
$$
\langle h_1, h_2 \rangle_{W^{k,2}} \= \sum_{0\leq i \leq k} \int_M g\big((D^{g})^i h_1,(D^{g})^i h_2\big) \,\nu_g \,\, .
$$
Since $M$ is compact, the topology induced by the norm $\| \cdot \|_{W^{k,2}} \= \sqrt{\langle \cdot , \cdot \rangle_{W^{k,2}}}$ is independent of the Riemannian metric $g$ (see e.g. \cite[Prop 2.2]{Hebey}). Accordingly, the Sobolev space $W^{k,2}(M,E)$ is defined as the completion of $\cC^{\infty}(M,E)$ with respect to the norm $\| \cdot \|_{W^{k,2}}$. For the sake of notation, we set $L^2 \= W^{0,2}$.

Notice that $\big(W^{k,2}(M,E), \langle {\phantom{i}},{\phantom{i}}\! \rangle_{W^{k,2}}\big)$ is a Hilbert space. Moreover, for any $k \geq m+1$, by the Sobolev Embeddings Theorem (see e.g. \cite[Thm 2.7]{Hebey}) there exists a continuous embedding
\begin{equation} \label{embSob}
\big(W^{k,2}(M,E),\, \|\cdot\|_{W^{k,2}}\big) \,\,\hookrightarrow\,\, \big(\cC^{k-m-1}(M,E),\, \|\cdot\|_{\cC^{k-m-1}}\big) \,\, ,
\end{equation}
where $\|\cdot\|_{\cC^{k'}}$ denotes the usual $\cC^{k'}$-norm for any integer $k'\geq0$.

\begin{remark}
We stress that \eqref{embSob} implies that $W^{k,2}(M,E)$ consists of continuous sections if $k \geq m+1$. In particular, it is possible to define $W^{k,2}(M,E)$ with $k \geq m+1$ for any subbundle $E$ of $\eT^{(\cdot,\cdot\cdot)}M$. A remarkable example is the {\it space of Riemannian metrics of class $W^{k,2}$} defined as
$$
\eM^k \=  W^{k,2}\big(M,S^2_+(T^*M)\big) \,\, .
$$
\end{remark}

Let now $E,F$ be two vector subbundle of $\eT^{(\cdot,\cdot\cdot)} M$ and $P: \cC^{\infty}(M,E) \to \cC^{\infty}(M,F)$ a linear differential operator of order $r$. We recall that: \begin{itemize}
\item[$\bcdot$] the {\it principal symbol of P} is the endomorphism $\s(P): T^*M \otimes E \to F$ defined as follows: for any $x \in M$, $\xi \in T_x^*M$, $s \in E_x$
\begin{equation} \label{symbol}
\s(P)_x(\xi\otimes s) \= \tfrac1{r!}P(\phi^ru)(x) \,\, ,
\end{equation}
where $\phi \in \cC^{\infty}(M,\bR)$ verifies $\phi(x)=0$, $\diff\phi_x=\xi$ and $u \in \cC^{\infty}(M,E)$ verifies  $u(x)=s$;
\item[$\bcdot$] the {\it $L^2$-adjoint of $P$} is the unique linear differential operator $P^*: \cC^{\infty}(M,F) \to \cC^{\infty}(M,E)$ of order $r$ satisfying
$$
\langle P(s_1), s_2\rangle_{L^2} = \langle s_1, P^*(s_2) \rangle_{L^2} \,\,\, \text{ for any } s_1 \in \cC^{\infty}(M,E) \, , \, s_2 \in \cC^{\infty}(M,F) \,\, .
$$
\end{itemize}
We also remark that $P$ can be uniquely extended to a linear differential operator $P: W^{k+r,2}(M,E) \to W^{k,2}(M,F)$ for any integer $k \geq 0$ (see \cite[Thm 6, p. 152]{palais}). Then, we have the following
 
\begin{theorem}[{Berger-Ebin Splitting Lemma \cite[Thm 4.1]{berger-ebin}}] \label{thm:BE}
Let $P: \cC^{\infty}(M,E) \to \cC^{\infty}(M,F)$ be a linear differential operator of order $r$ and $k \in \bN$ such that $k \geq r$. If $P$ has injective symbol or its $L^2$-adjoint $P^*$ has injective symbol, then
$$
W^{k,2}(M,F) = \im(P) \oplus \ker(P^*) \,\, ,
$$
where $P$ is extended to $P: W^{k+r,2}(M,E) \to W^{k,2}(M,F)$ and, consequently, $P^*: W^{k,2}(M,F) \to W^{k-r,2}(M,E)$. Moreover, if $P$ has injective symbol, then $\ker(P) \subset W^{k+r,2}(M,E)$ is finite dimensional and consists of smooth sections.
\end{theorem}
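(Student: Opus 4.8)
The plan is to reduce the statement to the standard elliptic theory on the compact manifold $M$, by passing to the natural second-order ``Laplacian'' built from $P$. I would start by recording the symbol identity $\s(P^*)_x(\xi)=(-1)^r\s(P)_x(\xi)^*$, so that ``$P$ has injective symbol'' is equivalent to ``$P^*$ has surjective symbol'' and conversely; it therefore suffices to treat the case in which $P$ has injective symbol, the case in which $P^*$ has injective symbol being analogous (and, in fact, slightly easier) upon interchanging the roles of $P$ and $P^*$ and of $E$ and $F$, working with $PP^*$ on $F$ and noting that then $\ker(P^*)$ is finite-dimensional and $\im(P)=\im(PP^*)$ is automatically closed.

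So assume $\s(P)_x(\xi)$ is injective for every $x\in M$ and every $0\neq\xi\in T^*_xM$, and set $\D_P \= P^*P$. This is a formally self-adjoint, non-negative linear differential operator of order $2r$ whose principal symbol at $\xi$ equals $\s(P)_x(\xi)^*\s(P)_x(\xi)$, which is positive definite for $\xi\neq0$; hence $\D_P$ is elliptic. At this point I would invoke the standard package for self-adjoint elliptic operators on compact manifolds (see e.g.\ \cite{palais,besse}): for every integer $s\geq0$ the extension $\D_P\colon W^{s+2r,2}(M,E)\to W^{s,2}(M,E)$ is Fredholm, its kernel is independent of $s$, finite-dimensional and contained in $\cC^{\infty}(M,E)$, and there is an $L^2$-orthogonal topological splitting $W^{s,2}(M,E)=\im(\D_P)\oplus\ker(\D_P)$, together with a continuous Green operator $G\colon W^{s,2}(M,E)\to W^{s+2r,2}(M,E)$ inverting $\D_P$ on $\im(\D_P)$ and vanishing on $\ker(\D_P)$. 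Since $\langle\D_P u,u\rangle_{L^2}=\|P u\|_{L^2}^2$, one has $\ker(\D_P)=\ker(P)$; this already yields the last assertion of the theorem.

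Next I would produce the splitting of $W^{k,2}(M,F)$; here the hypothesis $k\geq r$ is used, so that $P^*$ maps $W^{k,2}(M,F)$ into $W^{k-r,2}(M,E)$ and $\D_P$ maps $W^{k+r,2}(M,E)$ into $W^{k-r,2}(M,E)$. First, $\im(P)$ and $\ker(P^*)$ are $L^2$-orthogonal: for $w\in\ker(P^*)$ and $u\in W^{k+r,2}(M,E)$ one has $\langle P u,w\rangle_{L^2}=\langle u,P^*w\rangle_{L^2}=0$, the adjointness relation being extended by density from smooth sections; in particular $\im(P)\cap\ker(P^*)=\{0\}$, since $P u\in\ker(P^*)$ forces $\|P u\|_{L^2}^2=\langle u,P^*(P u)\rangle_{L^2}=0$. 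To see that $\im(P)+\ker(P^*)=W^{k,2}(M,F)$, take $w\in W^{k,2}(M,F)$, apply the $\D_P$-decomposition to $P^*w$, and write $P^*w=\D_P u+z$ with $u \= G(P^*w)\in W^{k+r,2}(M,E)$ and $z\in\ker(\D_P)=\ker(P)$. Then $z=P^*(w-P u)$, so $\|z\|_{L^2}^2=\langle z,P^*(w-P u)\rangle_{L^2}=\langle P z,w-P u\rangle_{L^2}=0$ because $P z=0$; hence $z=0$, i.e.\ $w-P u\in\ker(P^*)$, and $w=P u+(w-P u)$. Finally, the assignment $w\mapsto w-P\big(G(P^*w)\big)$ is a continuous projection of $W^{k,2}(M,F)$ onto $\ker(P^*)$, so $\im(P)$ is its kernel and is in particular closed; thus $W^{k,2}(M,F)=\im(P)\oplus\ker(P^*)$ is a topological direct sum.

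I expect the bulk of the work to lie in the (standard, though not entirely trivial) verification that the elliptic package applies at the correct Sobolev orders: ellipticity of $P^*P$, Fredholmness, elliptic regularity of $\ker(\D_P)$, the Hodge-type splitting, and continuity of the Green operator; once these are in hand, the remaining steps are formal manipulations with the adjointness relation under the constraint $k\geq r$. The one point that genuinely requires care, and which the argument above is designed to handle, is the closedness of $\im(P)$ in $W^{k,2}(M,F)$: $L^2$-orthogonality by itself only yields directness of the sum, so it is the explicit continuous projection furnished by the Green operator that upgrades the algebraic direct sum to a genuine splitting of Banach spaces.
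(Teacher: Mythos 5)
The paper offers no proof of this statement: it is imported verbatim from Berger--Ebin \cite[Thm 4.1]{berger-ebin} and used as a black box, so there is no internal argument to compare against. Your proof is correct and is essentially the standard (indeed, the original) one: ellipticity of $P^*P$ (resp.\ $PP^*$), the Hodge-type splitting with Green operator for a formally self-adjoint elliptic operator on a compact manifold, the identification $\ker(P^*P)=\ker(P)$ via $\langle P^*Pu,u\rangle_{L^2}=\|Pu\|_{L^2}^2$ together with elliptic regularity, and the continuous projection $\mathrm{Id}-P\circ G\circ P^*$, which is exactly what upgrades the $L^2$-orthogonal algebraic decomposition to a topological splitting with $\im(P)$ closed. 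The Sobolev bookkeeping forced by $k\geq r$ and the density extension of the adjointness relation are handled correctly, and the second case (where $P^*$ has injective symbol) is rightly reduced to the Hodge splitting for $PP^*$ via $\im(P)=\im(PP^*)$; the only step left implicit there --- that the $\ker(P^*)$-component of $Pu$ vanishes --- is the same one-line orthogonality argument you already spell out in the first case.
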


\begin{rem} \label{rem:adjP}
As a consequence of the Berger-Ebin Splitting Lemma, we remark that: {\it if $P^*$ is injective and has injective symbol, then $P$ is surjective and its kernel splits}.
\end{rem}

\subsection{Complex linear algebra} \hfill \par

Let $V=(V,J,g)$ be a triple given by a real vector space $V$ of dimension $\dim_{\bR}V=2m$, a linear complex structure $J$ on $V$ and an Euclidean scalar product $g$ on $V$ such that $g(J(\cdot),J(\cdot\cdot))=g(\cdot,\cdot\cdot)$. The complexification $V^{\bC}\=V \otimes_{\bR} \bC$ splits as a sum of $J$-eigenspaces $V^{\bC} = V^{1,0} \oplus V^{0,1}$ and all the real tensors on $V$ can be uniquely $\bC$-linearly extended to $V^{\bC}$. Fix a $(J,g)$-unitary basis $(e_i,Je_i)$ for $V$,
and consider the associated complex basis
$$
\e_i \= \tfrac1{\sqrt2}(e_i-\ti Je_i) \,\, , \quad \e_{\bar{i}} \= \tfrac1{\sqrt2}(e_i+\ti Je_i)
$$
for $V^{\bC}$, which is unitary with respect to the Hermitian extension of $g$ to $V^{\bC}$.
Clearly, it holds that $\overline{\e_i}=\e_{\bar{i}}$ and $J\e_i=\ti \e_i$, $J\e_{\bar{i}}=-\ti \e_{\bar{i}}$. Moreover, $J$ acts on covectors $\q \in V^*$ via $(J\q) \= - \q \circ J$, so that $(e^i,Je^i)$ is the dual basis of $(e_i,Je_i)$ for $V^*$. For the complexification, we get that
$$
\e^i \= \tfrac1{\sqrt2}(e^i+\ti Je^i) \,\, , \quad \e^{\bar{i}} \= \tfrac1{\sqrt2}(e_i-\ti Je_i)
$$
is the dual basis of $(\e_i,\e_{\bar{i}})$, and $J\e^i=-\ti \e^i$, $J\e^{\bar{i}}=\ti \e^{\bar{i}}$. With respect to such basis, we have
$$
g = \d_{\bar{j}i} \, \e^i \odot \e^{\bar{j}} \,\, , \quad \text{ with } \,\, \e^i \odot \e^{\bar{j}} \= \e^i \otimes \e^{\bar{j}} + \e^{\bar{j}} \otimes \e^i \,\, .
$$
We consider now the spaces
$$\begin{aligned}
\Sym^{1,1}(V) &\= \{h \in \End(V) : g(h(\cdot),\cdot\cdot)=g(\cdot,h(\cdot\cdot)) \, , \,\, [h,J]=0 \} \,\, , \\
\Skew^{1,1}(V) &\= \{\tilde{h} \in \End(V) : g(\tilde{h}(\cdot),\cdot\cdot)=-g(\cdot,\tilde{h}(\cdot\cdot)) \, , \,\, [\tilde{h},J]=0 \} \,\, .
\end{aligned}$$
Notice now that any $h \in \Sym^{1,1}(V)$ preserves the decomposition $V^{\bC} = V^{1,0} \oplus V^{0,1}$ and takes the form
$$
h = h^j_{{\phantom{j}}i} \, \e_j \otimes \e^i + h^{\bar{j}}_{{\phantom{\bar{j}}}\bar{i}} \, \e_{\bar{j}} \otimes \e^{\bar{i}}  \,\, , \quad \text{ with } \,\, h^j_{{\phantom{j}}i} \in \bR \,\, \text{ and } \,\, h^{\bar{j}}_{{\phantom{\bar{j}}}\bar{i}} = h^j_{{\phantom{j}}i} = h^i_{{\phantom{i}}j} \,\, .
$$
Moreover, the linear map
$$
\Sym^{1,1}(V) \to \Skew^{1,1}(V) \,\, , \quad h \mapsto \tilde{h} = J \circ h
$$
is an isomorphism, with inverse given by $\tilde{h} \mapsto h= -J \circ \tilde{h}$. Then, we denote by $\tr^{\bR}: \Sym^{1,1}(V) \to \bR$ the trace of the real endomorphism $h: V \to V$ and by $\tr^{\bC}: \Sym^{1,1}(V) \to \bR$ the trace of the complex endomorphism $h: V^{1,0} \to V^{1,0}$, which are related by
$$
\tr^{\bR}(h) = \sum_{1 \leq i \leq m} \big(g(h(e_i),e_i) + g(h(Je_i),Je_i)\big) = 2 \sum_{1 \leq i \leq m} g(h(\e_i),\e_{\bar{i}}) = 2\tr^{\bC}(h) \,\, .
$$
Finally, we consider the space
$$
\L^{1,1}(V^*) \= \{ \a \in \L^2(V^*) : a(J(\cdot),J(\cdot\cdot))=\a(\cdot,\cdot\cdot) \} \,\, ,
$$
and we observe that the linear map
$$
\rho_g: \Sym^{1,1}(V) \to \L^{1,1}(V^*) \,\, , \quad \rho_g(h) \= g((J \circ h) \,\cdot,\cdot\cdot)
$$
is an isomorphism. Accordingly, we define
$$
\Tr^{\bC}_g : \L^{1,1}(V^*) \to \bR \,\, , \quad \Tr^{\bC}_g(\a) \= \tr^{\bC}\big(\rho_g^{-1}(\a)\big) \,\, .
$$
Since any $\a \in \L^{1,1}(V^*)$ is of the form
$$
\a = \a_{\bar{j}i} \,\ti\, \e^i \wedge \e^{\bar{j}} \,\, , \quad \text{ with } \,\, \e^i \wedge \e^{\bar{j}} \= \e^i \otimes \e^{\bar{j}} - \e^{\bar{j}} \otimes \e^i \,\, ,
$$
an easy computation shows that
$$
\Tr^{\bC}_g(\a) = \sum_{1 \leq i \leq m} \a(e_i,Je_i) = -\ti\sum_{1 \leq i \leq m} \a(\e_i,\e_{\bar{i}}) = \d^{i\bar{j}}\a_{\bar{j}i} \,\, .
$$

\subsection{The Chern connection} \hfill \par

Let $(M,J,g)$ be a compact, connected, Hermitian manifold of dimension $\dim_{\bR}M=2m$ and let $\w \= \rho_g(\Id)=g(J\cdot, \cdot)$ be its fundamental $2$-form, where $\Id\in\cC^{\infty}(M,\Sym^{1,1}(TM))$ is the identity endomorphism, $D^g$ its Levi-Civita connection and $\n$ its Chern connection, defined by
\begin{equation} \label{defChern}
g(\n_XY,Z) \= g(D^g_XY,Z)-\tfrac12\diff\w(JX,Y,Z) 
\end{equation}
for any $X,Y,Z \in \cC^{\infty}(M,TM)$. It is well-known that the Chern connection is characte\-rized by the following properties, see e.g. \cite[p. 273]{gauduchon-bumi}:
$$
\n g = 0 \,\, , \quad \n J =0 \,\, , \quad J(T(X,Y))=T(JX,Y)=T(X,JY) \,\, , 
$$
where $T(X,Y) \= \n_XY -\n_YX - [X,Y]$ is the {\em torsion tensor of $\n$}, which can be expressed by (see e.g. \cite[Prop 4]{gauduchon-bumi})
$$
-2g(T(X,Y),Z) = \diff\w(JX,Y,Z) + \diff\w(X,JY,Z) \,\, .
$$
We denote by
\begin{equation}\label{eq:def-lee}
\q(X) \= \tr^{\bR}(T(X,\cdot)) = \Tr^{\bC}_g(X\lrcorner\diff\w)
\end{equation}
the {\it Lee form}. We also define the {\it Chern-curvature operator} by
\begin{equation} \label{Chern-curv}
\W(g) \in \cC^{\infty}(M,\L^{1,1}(T^*M) \otimes \Skew^{1,1}(TM)) \,\, , \quad \W(g)(X,Y) \= \n_{[X,Y]} - [\n_X,\n_Y] \,\, .
\end{equation}
Moreover, we call {\it first Chern-Ricci form} the tensor field
\begin{equation}\label{eq:def-tildeS}
\tilde{S}(g) \in \cC^{\infty}(M,\L^{1,1}(T^*M)) \,\, , \quad \tilde{S}(g)(X,Y) \= -\tr^{\bC}(J \circ \W(g)(X,Y))
\end{equation}
and {\it first Chern-Ricci symmetric endomorphism}
$$
S(g) \in \cC^{\infty}(M,\Sym^{1,1}(TM)) \,\, , \quad S(g) \= \rho_g^{-1}\big(\tilde{S}(g)\big) \,\, .
$$
Finally, the {\it Chern-scalar curvature} is the trace
\begin{equation}\label{eq:def-scal}
\scal^{\rm Ch}(g) \in \cC^{\infty}(M,\bR) \,\, , \quad \scal^{\rm Ch}(g) \= 2\Tr^{\bC}_g(\tilde{S}(g)) \,\, .
\end{equation}
We remark that, with this notation, when $g$ is K\"ahler it holds that
$$
S(g) = \Ric(g) \,\, , \quad \scal^{\rm Ch}(g)= \scal(g) \,\, ,
$$
where $\Ric(g)$ and $\scal(g)$ denote the Riemannian Ricci endomorphism and the Riemannian scalar curvature of $g$, respectively. We recall that $g$ is called {\em first-Chern-Einstein} if it satisfies $S(g) =\tfrac{\l}{2m} \Id$ for some $\l \in\cC^{\infty}(M,\bR)$, see \cite{tosatti, angella-calamai-spotti-2} and references therein. Notice that, in this case $\l = \scal^{\rm Ch}(g)$ and, if $g$ is K\"ahler, then this notion corresponds to the K\"ahler-Einstein condition.

We also set $\diff^{\,c} \= J^{-1} \! \circ \diff\, \circ J$, so that
$$
\diff = \p+\bar{\p} \,\, , \quad
\diff^{\,c} = -\ti(\p-\bar{\p}) \,\, , \quad
\diff\diff^{\,c}=2\ti\p\bar{\p}
$$
and we denote by $\D_g \= (D^g)^*D^g$ the {\it Laplace-Beltrami operator}. We recall that $\diff\diff^{\,c}$ and $\D_g$ are related by the following

\begin{lemma}[{see e.g. \cite[p. 502]{gauduchon-mathann}}]
For any function $u \in \cC^{\infty}(M,\bR)$ it holds that
\begin{equation} \label{ChernLapl}
\Tr^{\bC}_g(\diff\diff^{\,c}\!u) = \D_g u + g(\diff u, \q) \,\, .
\end{equation}
\end{lemma}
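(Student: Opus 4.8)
The plan is to reduce the identity, which is pointwise in nature, to a computation in a local $(J,g)$-unitary frame $(e_i,Je_i)_{1\le i\le m}$, and to play off the fact that the Chern connection $\n$ satisfies $\n J=\n g=0$ while its torsion $T$ is of type $(2,0)$. First I would use the definition of $\diff^{\,c}$ in the form $\diff^{\,c}u=J^{-1}(\diff u)=-J(\diff u)$, so that $(\diff^{\,c}u)(X)=\diff u(JX)$, and write the exterior differential of this $1$-form through $\n$: for every $1$-form $\b$ one has $\diff\b(X,Y)=(\n_X\b)(Y)-(\n_Y\b)(X)+\b(T(X,Y))$. Since $\n$ commutes with $J$, $\n_X(\diff^{\,c}u)=-J(\n_X\diff u)$, and therefore
\[
\diff\diff^{\,c}u(X,Y)=\n^2u(X,JY)-\n^2u(Y,JX)+\diff u\big(J\,T(X,Y)\big)\,,
\]
where $\n^2u(X,Y)\=(\n_X\diff u)(Y)$ denotes the Chern Hessian of $u$.

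Next I would contract with the unitary frame. Using $J^2=-\Id$ one gets $\diff\diff^{\,c}u(e_i,Je_i)=-\n^2u(e_i,e_i)-\n^2u(Je_i,Je_i)+\diff u\big(J\,T(e_i,Je_i)\big)$, and the torsion term drops out because the $(2,0)$-type of $T$ forces $J\,T(e_i,Je_i)=T(Je_i,Je_i)=0$. Summing over $i$ then gives $\Tr^{\bC}_g(\diff\diff^{\,c}u)=-\sum_i\big(\n^2u(e_i,e_i)+\n^2u(Je_i,Je_i)\big)$, that is, minus the full $g$-trace of the Chern Hessian.

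Then I would pass from the Chern Hessian to the Riemannian one by means of \eqref{defChern}: since $\n_XY-D^g_XY$ is the $g$-dual of $-\tfrac12\diff\w(JX,Y,\cdot\,)$, one has $\n^2u(X,Y)=\Hess u(X,Y)+\tfrac12\diff\w\big(JX,Y,(\diff u)^\sharp\big)$ with $\Hess u\=D^g\diff u$. The $g$-trace of the first summand is $-\D_g u$, because $\D_g=(D^g)^*D^g$ acts on functions as $-\tr_g\Hess$. For the second summand, the antisymmetry of $\diff\w$ in its first two entries together with a cyclic permutation of its three entries turn the contraction into $-\sum_i\diff\w\big((\diff u)^\sharp,e_i,Je_i\big)=-\Tr^{\bC}_g\big((\diff u)^\sharp\lrcorner\diff\w\big)$, which by the very definition \eqref{eq:def-lee} of the torsion $1$-form is $-\q\big((\diff u)^\sharp\big)=-g(\diff u,\q)$. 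Hence $\tr_g\n^2u=-\D_g u-g(\diff u,\q)$, and combining with the previous step yields $\Tr^{\bC}_g(\diff\diff^{\,c}u)=-\tr_g\n^2u=\D_g u+g(\diff u,\q)$, which is \eqref{ChernLapl}.

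The one genuinely delicate point is the bookkeeping of signs and normalizations: the action of $J$ on forms (hence the sign of $\diff^{\,c}$), the sign convention for $\D_g$, and the normalizations built into \eqref{defChern} and into $\Tr^{\bC}_g$ must all be kept coherent, and it is precisely the $(2,0)$-type of the Chern torsion that is responsible for the torsion contribution vanishing under the $\Tr^{\bC}_g$-contraction (before tracing, that term is nonzero at a generic Hermitian metric). An alternative I would keep in reserve is to run the same computation with the Levi-Civita connection instead of $\n$; this avoids the torsion but produces terms involving $D^gJ$, whose trace then has to be recognized as a multiple of $\q$.
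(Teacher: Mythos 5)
Your argument is correct and is essentially the computation the paper carries out: a local $(J,g)$-unitary frame calculation that uses $\n J=0$, the $(2,0)$-type of the Chern torsion to kill the torsion contribution under $\Tr^{\bC}_g$, and the defining relation \eqref{defChern} between $\n$ and $D^g$ to produce the $g(\diff u,\q)$ term via \eqref{eq:def-lee}. The only difference is packaging: you route the computation through the Chern and Riemannian Hessians and the torsion formula for $\diff$ of a $1$-form, whereas the paper expands $\diff\diff^{\,c}u$ directly in iterated Lie derivatives and identifies $\sum_{\a}(\n-D^g)(\tilde{e}_{\a},\tilde{e}_{\a})=\q^{\#}$; the signs and normalizations you flag all check out against the paper's conventions.
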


\begin{proof}
For the sake of completeness, we summarize here the computation.
By the very definition, for any vector fields $X,Y$ it holds that
\begin{equation} \label{eq:ddcu}
\diff\diff^{\,c}\!u(X,Y) = \eL_{X}\eL_{JY}u - \eL_{Y}\eL_{JX}u - \eL_{J[X,Y]}u \,\, .
\end{equation}
Let now $(\tilde{e}_{\a})=(e_i,Je_i)$ be a local $(J,g)$-unitary frame on $M$ and set $A \= \n - D^g$. Then, notice that
$$
\sum_{1 \leq i \leq m} -J[e_i,Je_i] = \sum_{1 \leq i \leq m} -J(\n_{e_i}Je_i -\n_{Je_i}e_i -T(e_i,Je_i)) = \sum_{1 \leq \a \leq 2m} \n_{\tilde{e}_{\a}}\tilde{e}_{\a}
$$
and so
$$\begin{aligned}
\Tr^{\bC}_g(\diff\diff^{\,c}u) &= \sum_{1\leq i \leq m} (\diff\diff^{\,c}u)(e_i,Je_i) \\
&= -\sum_{1\leq i \leq m}(\eL_{e_i}\eL_{e_i}u +\eL_{Je_i}\eL_{Je_i}u) -\sum_{1\leq i \leq m}\eL_{J[e_i,Je_i]}u \\
&= \sum_{1\leq \a \leq 2m} \!\!-(\eL_{\tilde{e}_{\a}}\eL_{\tilde{e}_{\a}} -\eL_{D^g_{\tilde{e}_{\a}}\tilde{e}_{\a}})u +\sum_{1\leq \a \leq 2m}\eL_{A(\tilde{e}_{\a},\tilde{e}_{\a})}u \,\, .
\end{aligned}$$
Moreover, from \eqref{defChern} we get
\begin{align*}
\sum_{1\leq \a \leq 2m} A(\tilde{e}_{\a},\tilde{e}_{\a}) &= \sum_{1\leq \a,\b \leq 2m}  -\tfrac12\diff\w(J\tilde{e}_{\a},\tilde{e}_{\a},\tilde{e}_{\b})\tilde{e}_{\b} \\
&= \sum_{1\leq \b \leq 2m} \sum_{1\leq i \leq m} \diff\w(\tilde{e}_{\b},e_i,Je_i)\tilde{e}_{\b} = \sum_{1\leq \b \leq 2m} \q(\tilde{e}_{\b})\tilde{e}_{\b} = \q^{\#}
\end{align*}
and therefore we obtain \eqref{ChernLapl}.
\end{proof}

Given a function $u \in \cC^{\infty}(M,\bR)$, we also denote by $\Hess_g(u) \in \cC^{\infty}(M,\Sym(TM))$ the {\it Hessian of $u$} defined by
$$
g(\Hess_g(u)(X),Y) \= (D^g_X(\diff u))(Y)
$$
and we stress the following

\begin{lemma}
If $g$ is K\"ahler, then
\begin{equation} \label{ddcKahler}
\diff\diff^{\,c}\!u(X,Y) = g(\Hess_g(u)(X),JY) -g(\Hess_g(u)(JX),Y) \,\, .
\end{equation}
\end{lemma}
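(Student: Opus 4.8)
The plan is to derive \eqref{ddcKahler} from the coordinate-free expression \eqref{eq:ddcu} for $\diff\diff^{\,c}u$, which is valid for every Hermitian metric, by trading Lie derivatives for covariant derivatives with respect to the Levi-Civita connection $D^g$ and then exploiting the K\"ahler condition $D^gJ=0$.

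First I would rewrite, for vector fields $X,Y$, the term $\eL_X\eL_{JY}u = X\big(\diff u(JY)\big) = (D^g_X\diff u)(JY) + \diff u\big(D^g_X(JY)\big)$. Here is the one place the K\"ahler hypothesis is genuinely used: since $D^gJ=0$ we have $D^g_X(JY)=J(D^g_XY)$, and recalling the definition $g(\Hess_g(u)(X),Y)=(D^g_X\diff u)(Y)$ this gives $\eL_X\eL_{JY}u = g(\Hess_g(u)(X),JY)+\diff u(JD^g_XY)$, and symmetrically for $\eL_Y\eL_{JX}u$. For the last term of \eqref{eq:ddcu} I would use that $D^g$ is torsion-free, so $[X,Y]=D^g_XY-D^g_YX$ and hence $\eL_{J[X,Y]}u=\diff u(JD^g_XY)-\diff u(JD^g_YX)$.

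Substituting these identities into \eqref{eq:ddcu}, all the terms of the form $\diff u(JD^g_\bullet\bullet)$ cancel, leaving $\diff\diff^{\,c}u(X,Y)=g(\Hess_g(u)(X),JY)-g(\Hess_g(u)(Y),JX)$. It then only remains to rewrite the second summand: since $\Hess_g(u)$ is $g$-symmetric (a consequence of $D^g$ being torsion-free) and $g$ is symmetric, $g(\Hess_g(u)(Y),JX)=g(JX,\Hess_g(u)(Y))=g(\Hess_g(u)(JX),Y)$, which is exactly \eqref{ddcKahler}.

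I do not expect any serious obstacle: the argument is essentially bookkeeping, and the only conceptual point is the commutation $D^g_X(JY)=J(D^g_XY)$, which fails in the general Hermitian case (the surviving terms $(D^g_XJ)Y$ would then break the clean formula). The only mild care needed is consistency of the sign convention for $J$ acting on $1$-forms — so that $\diff^{\,c}u=\diff u\circ J$ is compatible with \eqref{eq:ddcu} — and keeping track of the symmetrizations in the final step. Alternatively, one could check \eqref{ddcKahler} pointwise in local holomorphic normal coordinates, where both $\Hess_g(u)$ and $\diff\diff^{\,c}u=2\ti\p\bar\p u$ have explicit expressions, but the intrinsic computation above seems cleaner.
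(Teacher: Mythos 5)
Your argument is correct and is essentially the paper's own proof run in the opposite direction: the paper starts from $g(\Hess_g(u)(X),JY)-g(\Hess_g(u)(JX),Y)$, expands the Hessians as $\eL_X\eL_Yu-\eL_{D^g_XY}u$, and uses $D^gJ=0$ together with torsion-freeness to recover the right-hand side of \eqref{eq:ddcu}, exactly the ingredients you use. The only cosmetic difference is that you invoke the $g$-symmetry of $\Hess_g(u)$ at the end, whereas the paper avoids this by commuting $\eL_{JX}\eL_Y$ directly; both are fine.
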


\begin{proof}
By the very definition, we have
$$
g(\Hess_g(u)(X),Y) = \eL_{X}\eL_{Y}u -\eL_{D^g_XY}u \,\, .
$$
Therefore, since $g$ is K\"ahler, we get
$$\begin{aligned}
g(\Hess_g(u)(X),&JY) -g(\Hess_g(u)(JX),Y) \\
&= \eL_{X}\eL_{JY}u -\eL_{D^g_X(JY)}u -\eL_{JX}\eL_{Y}u +\eL_{D^g_{JX}Y}u \\
&= \eL_{X}\eL_{JY}u -\eL_{Y}\eL_{JX}u -\eL_{[JX,Y]}u -\eL_{JD^g_XY}u +\eL_{JD^g_YX}u +\eL_{[JX,Y]}u \\
&= \eL_{X}\eL_{JY}u - \eL_{Y}\eL_{JX}u - \eL_{J[X,Y]}u \\
&= \diff\diff^{\,c}\!u(X,Y)
\end{aligned}$$
which concludes the proof.
\end{proof}

Finally, we introduce the following two operators
$$
\d_g, \d^{\n} : \cC^{\infty}(M,\Sym(TM)) \to \cC^{\infty}(M,T^*M)
$$
defined by
\begin{equation} \label{formula:div}
(\d_gh)(X) \= \tr^{\bR}\!\big((D^gh)X\big) \,\, , \quad (\d^{\n}h)(X) \= \tr^{\bR}\!\big((\n h)X\big) \,\, .
\end{equation}

\section{Variation formulas for the Chern-scalar curvature} \label{sec:variation}
\setcounter{equation} 0

Let $(M,J)$ be a compact, connected, complex manifold of dimension $\dim_{\bR}M=2m$. We consider the bundle $S^{1,1}_+(T^*M) \to M$ of symmetric, bilinear, positive definite, $J$-invariant forms and, for any integer $k \geq m+1$, we define the space of {\it Hermitian metrics on $(M,J)$ of class $W^{2,k}$} as
$$
\eM_{\rm H}^k \= W^{k,2}(M,S^{1,1}_+(T^*M)) \,\, .
$$
For the sake of notation, we denote the space of smooth Hermitian metrics by $\eM_{\rm H} \= \cC^{\infty}(M,S^{1,1}_+(T^*M))$.

\subsection{First variation of the Chern-scalar curvature} \hfill \par

Fix a smooth Hermitian metric $g \in \eM_{\rm H}$. Then, given an element $h \in \cC^{\infty}(M,\Sym^{1,1}(TM))$, we consider the corresponding path $(g_t) \subset \eM_{\rm H}$ given by
$$
g_t \= g((\Id+th)\,\cdot,\cdot\cdot) \,\, , \quad t \in (-\epsilon,\epsilon)
$$
with $\epsilon>0$ small enough. For the sake of notation, in this section we will always use this shortener notation: if $F$ is a function defined on $\eM_{\rm H}$, we write $F'$ instead of $F'_g(h)$ to denote the differential of $F$ at $g$ in the direction of $h$.

\begin{prop} The differential at $g$ in the direction of $h$ of the Chern connection  is the $(1,2)$-tensor field $\n'$ defined by
\begin{equation} \label{nablalin}
2\,\n'_XY = (\n_Xh)(Y) -(J\circ\n_{JX}h)(Y) \,\, .
\end{equation}
\end{prop}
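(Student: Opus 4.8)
The plan is to differentiate the defining formula \eqref{defChern} of the Chern connection with respect to the metric. Since $\n$ is not tensorial, we first recall that the difference of any two connections is a tensor, so $\n' := \tfrac{d}{dt}\big|_{t=0}\n^{g_t}$ is a well-defined $(1,2)$-tensor; concretely, for a fixed extension of $X,Y$ to vector fields, $\n'_XY$ is obtained by differentiating $\n^{g_t}_XY$ in $t$. We differentiate the identity $g_t(\n^{g_t}_XY,Z) = g_t(D^{g_t}_XY,Z) - \tfrac12 \diff\w_t(JX,Y,Z)$ at $t=0$, where $g_t = g((\Id+th)\cdot,\cdot\cdot)$, $\dot g_t|_0 = g(h\cdot,\cdot\cdot) =: \dot g$, and $\w_t = g_t(J\cdot,\cdot) = \rho_g(\Id + th)$, so $\dot\w = \rho_g(h)$.

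The key steps, in order. First, compute the left side: $\tfrac{d}{dt}\big|_0 g_t(\n^{g_t}_XY,Z) = \dot g(\n_XY,Z) + g(\n'_XY,Z)$, where $\n = \n^g$. Second, recall the classical first variation of the Levi-Civita connection: $g(D'_XY,Z) = \tfrac12\big((D^g_X\dot g)(Y,Z) + (D^g_Y\dot g)(X,Z) - (D^g_Z\dot g)(X,Y)\big)$, and when differentiating $g_t(D^{g_t}_XY,Z)$ we also pick up the term $\dot g(D^g_XY,Z)$. Third, differentiate $-\tfrac12\diff\w_t(JX,Y,Z)$, which gives $-\tfrac12 \diff\dot\w(JX,Y,Z)$ since $J$ is fixed and $\diff$ commutes with $\tfrac{d}{dt}$. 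Fourth, subtract the original defining equation times nothing (it is already absorbed): the terms $\dot g(\n_XY,Z)$ on the left and $\dot g(D^g_XY,Z)$ on the right combine using \eqref{defChern} itself — namely $\dot g(\n_XY,Z) - \dot g(D^g_XY,Z) = -\tfrac12\dot g$ applied appropriately is \emph{not} literally true since $\dot g$ is a different metric, so instead one writes $\dot g(\cdot,\cdot) = g(h\cdot,\cdot)$ and expands everything in terms of $g$, $h$, $\n^g$ (replacing $D^g$-derivatives of $\dot g$ by $\n^g$-derivatives of $h$ using $\n^g g = 0$, $\n^g J = 0$ and the torsion terms). Fifth, collect all terms: the $D^g\dot g$ expressions and the $\diff\dot\w$ expression should reorganize — using $\w = g(J\cdot,\cdot)$, the $J$-invariance $[h,J]=0$, and the symmetry $g(h\cdot,\cdot\cdot)=g(\cdot,h\cdot\cdot)$ — into $\tfrac12\big((\n^g_X h)(Y,Z) - (\n^g_{JX}h)(JY,Z)\big)$, i.e. into $g\big(\tfrac12(\n_Xh)(Y) - \tfrac12(J\circ\n_{JX}h)(Y),\,Z\big)$, which since $Z$ is arbitrary yields \eqref{nablalin}.

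The main obstacle I expect is the bookkeeping in the fifth step: one must convert Levi-Civita covariant derivatives of $\dot g$ into Chern covariant derivatives of $h$, and this conversion introduces the difference tensor $A = \n^g - D^g$, whose contractions must be shown to cancel against the $\diff\w_t$-variation. The cleanest route is probably to work at the center of a normal frame for $\n$ (a $(J,g)$-unitary frame with $\n\tilde e_\a = 0$ at the point), so that $\n^g_X h$ is just the directional derivative of the components of $h$, and to express $\diff\w$ and $D^g$ in terms of $\n^g$ and $T$ via \eqref{defChern} and the torsion formula $-2g(T(X,Y),Z) = \diff\w(JX,Y,Z) + \diff\w(X,JY,Z)$; then the variation of the torsion contribution and the variation of the Levi-Civita part recombine by a direct symmetry/type argument. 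One should also double-check the claimed antisymmetrization $(J\circ\n_{JX}h)(Y)$: since $h$ commutes with $J$ and $\n J = 0$, we have $\n_{JX}h$ still commuting with $J$, so $J\circ\n_{JX}h$ is skew-symmetric, consistent with $\n'$ being the variation of a metric connection (so that $\n'_XY$ contributes a skew part via $J\circ(\cdot)$ and the symmetric part through the $\n_Xh$ term). Finally, one verifies the two compatibility checks that the formula must satisfy — that $\n'$ still annihilates $g$ up to the variation (i.e. $X g_t(Y,Z)$ differentiated matches) and annihilates $J$ — as a sanity check rather than as part of the derivation.
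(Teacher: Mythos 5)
Your strategy---differentiate the defining relation \eqref{defChern} in $t$, using the classical first variation of the Levi-Civita connection together with the variation of $\diff\w_t$---is genuinely different from the paper's. The paper instead writes $C^t\=\n^{g_t}-\n^g$ and subtracts the Koszul formula of the \emph{Chern} connection for $g_t$ from the one for $g$: since $g_t-g=t\,g(h\,\cdot,\cdot\cdot)$, every difference of Lie-derivative and bracket terms is manifestly linear in $h$, and after dividing by $t$ the leftover connection/torsion terms cancel in pairs (using $\n g=0$ and the type condition on $T$), leaving exactly $g((\n_Xh)(Y),Z)-g((\n_{JX}h)(JY),Z)$. That route never needs the variation of $D^g$, of $A=\n-D^g$, or of $\diff\w$ separately, which is precisely the bookkeeping your plan defers.

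The gap in your proposal is that this deferred bookkeeping \emph{is} the proposition. After your steps one through four, what remains to be shown is the identity
\begin{equation*}
\diff\w(JX,Y,hZ)+(D^g_X\dot g)(Y,Z)+(D^g_Y\dot g)(X,Z)-(D^g_Z\dot g)(X,Y)-\diff(\rho_g(h))(JX,Y,Z)=g((\n_Xh)(Y),Z)-g((\n_{JX}h)(JY),Z)\,,
\end{equation*}
and your step five only asserts that the left-hand side ``should reorganize'' into the right-hand side. Nothing in the write-up establishes this; converting the three $D^g\dot g$ terms into $\n h$ terms produces contractions of $h$ with $A$ and with $T$ that must be matched, term by term, against $\diff\w(JX,Y,hZ)$ and against $\diff(\rho_g(h))$, and the exterior derivative of $\rho_g(h)$ itself expands into six covariant-derivative terms. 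Until that cancellation is exhibited, the proof is incomplete at its only substantive point. (Your sanity checks at the end are fine but do not substitute for it: they confirm the symmetric part $g(\n'_XY,Z)+g(Y,\n'_XZ)=g((\n_Xh)Y,Z)$, which pins down only half of the formula; the skew part $J\circ\n_{JX}h$ is exactly what the unexecuted computation must produce.) If you want to keep your route, I would recommend either carrying out that identity in a $\n$-parallel unitary frame at a point, or switching to the Koszul-difference argument, where the cancellations are immediate.
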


\begin{proof}
We set $C^t \= \n^{g_t}-\n^g$, that is a $(2,1)$-tensor field. Then, by using the Koszul Formula of the Chern connection (see e.g. \cite[Sect 2.1]{angella-pediconi})
\begin{align*}
2g_t(C^t_XY,Z) &= 2g_t(\n^{g_t}_XY,Z) -2g_t(\n^g_XY,Z) \\
&= \eL_X(g_t(Y,Z)) -\eL_X(g(Y,Z)) -\eL_{JX}(g_t(JY,Z)) +\eL_{JX}(g(JY,Z)) \\
&\phantom{=} +g_t([X,Y],Z) -g([X,Y],Z) -g_t([JX,Y],JZ) +g([JX,Y],JZ) \\
&\phantom{=} -g_t([X,Z],Y) +g([X,Z],Y) +g_t([JX,Z],JY) -g([JX,Z],JY) -2tg(h(\n^g_XY),Z) \\
&=t \eL_X(g(h(Y),Z)) -t\eL_{JX}(g(h(JY),Z)) +tg(h([X,Y]),Z) -tg(h([JX,Y]),JZ) \\
&\phantom{=} -tg(h([X,Z]),Y) +tg(h([JX,Z]),JY) -2tg(h(\n^g_XY),Z)
\end{align*}
and so
\begin{align*}
2&g((\tfrac1tC^t)_XY,Z) +2g(h(C^t_XY),Z) = \\
&= g((\n^g_Xh)(Y),Z) -g((\n^g_{JX}h)(JY),Z) -g(h(\n^g_XY),Z) +g(h([X,Y]),Z) +g(h(\n^g_XZ),Y) \\
&\phantom{=} -g(h([X,Z]),Y) +g(h(\n^g_{JX}Y),JZ) -g(h([JX,Y]),JZ) -g(h(\n^g_{JX}Z),JY) +g(h([JX,Z]),JY) \\
&= g((\n^g_Xh)(Y),Z) -g((\n^g_{JX}h)(JY),Z) -g(h(\n^g_YX+T^g(X,Y)),Z) +g(h(\n^g_YX+T^g(X,Y)),Z) \\
&\phantom{=} +g(h(\n^g_ZX+T^g(X,Z)),Y) -g(h(\n^g_ZX+T^g(X,Z)),Y) \\
&= g((\n^g_Xh)(Y),Z) -g((\n^g_{JX}h)(JY),Z) \,\, .
\end{align*}
Letting $t\to0$, we obtain \eqref{nablalin}.
\end{proof}

\begin{lemma}
The differential at $g$ in the direction of $h$ of the complex trace is the linear map defined, for $\alpha \in \cC^{\infty}(M,\L^{1,1}(T^*M))$, by
\begin{equation} \label{eq:diffTrC}
(\Tr^{\bC})'(\a) = -\tfrac12 g(h,\rho_g^{-1}(\a)) \,\, .
\end{equation}
\end{lemma}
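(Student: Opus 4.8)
The plan is to reduce the statement to differentiating an inverse endomorphism. Set $k \= \rho_g^{-1}(\a) \in \cC^{\infty}(M,\Sym^{1,1}(TM))$, so that $\a=\rho_g(k)=g((J\circ k)\,\cdot,\cdot\cdot)$, and for $|t|$ small let $k_t \= \rho_{g_t}^{-1}(\a)$ be the unique $J$-commuting, $g_t$-symmetric endomorphism with $g_t((J\circ k_t)\,\cdot,\cdot\cdot)=\a$; this makes sense since $g_t\in\eM_{\rm H}$. By construction $\Tr^{\bC}_{g_t}(\a)=\tr^{\bC}(k_t)$, so the task is to compute $\tfrac{d}{dt}\big|_{0}\tr^{\bC}(k_t)$. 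The first step is to make $k_t$ explicit: since $\Id+th$ commutes with $J$ and is $g$-self-adjoint, one checks that $(\Id+th)\circ k_t$ is again $J$-commuting and $g$-symmetric, and that
$$
\rho_g\big((\Id+th)\circ k_t\big) = g\big(((\Id+th)\circ J\circ k_t)\,\cdot,\cdot\cdot\big) = g_t\big((J\circ k_t)\,\cdot,\cdot\cdot\big) = \a \,\, ;
$$
hence $(\Id+th)\circ k_t = \rho_g^{-1}(\a)=k$, that is, $k_t=(\Id+th)^{-1}\circ k$.

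The second step is the differentiation. Since $\tr^{\bC}$ is linear and continuous and $k$ does not depend on $t$,
$$
(\Tr^{\bC})'(\a) = \tfrac{d}{dt}\big|_{0}\tr^{\bC}\big((\Id+th)^{-1}\circ k\big) = \tr^{\bC}\big((-h)\circ k\big) = -\tr^{\bC}(h\circ k) \,\, .
$$
The third step rewrites this trace as the tensor inner product appearing in \eqref{eq:diffTrC}: since $h$ and $k$ both commute with $J$, so does $h\circ k$, whence $\tr^{\bR}(h\circ k)=2\tr^{\bC}(h\circ k)$; and since, in a $g$-orthonormal frame, the inner product induced by $g$ on symmetric endomorphisms is $g(h,k)=\tr^{\bR}(h\circ k)$, we get $\tr^{\bC}(h\circ k)=\tfrac12 g(h,k)=\tfrac12 g(h,\rho_g^{-1}(\a))$, which is \eqref{eq:diffTrC}.

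The calculation is short, so there is no serious obstacle; the points that need care are the verification that $(\Id+th)\circ k_t$ stays in $\Sym^{1,1}(TM)$ — so that $\rho_g^{-1}$ may legitimately be applied to it — and the bookkeeping of the factor $\tfrac12$, which comes from combining $\tr^{\bR}=2\tr^{\bC}$ on $J$-commuting endomorphisms with the identification $g(h,k)=\tr^{\bR}(h\circ k)$ of the tensor metric on symmetric endomorphisms.
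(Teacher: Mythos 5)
Your proof is correct and is in substance the same as the paper's: both reduce the claim to differentiating an inverse at $t=0$ --- the paper computes with the inverse metric components $(g_t)^{i\bar{j}}$ in a local unitary frame, while you phrase it invariantly via $\rho_{g_t}^{-1}(\a)=(\Id+th)^{-1}\circ\rho_g^{-1}(\a)$ --- and both finish with the identity $\tr^{\bC}\big(h\circ\rho_g^{-1}(\a)\big)=\tfrac12\,g\big(h,\rho_g^{-1}(\a)\big)$. The only point I would tighten is the last step: $h\circ k$ need not be $g$-symmetric, so invoking $\tr^{\bR}=2\tr^{\bC}$ (which the paper states for elements of $\Sym^{1,1}$) requires the extra observation that $\tr_{V^{1,0}}(h\circ k)$ is real; this holds because $h$ and $k$ are both Hermitian in a unitary frame, or equivalently because $h\circ k$ and its $g$-adjoint $k\circ h$ have the same trace.
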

\begin{proof}
Fix a local $(J,g)$-unitary frame $(e_i,Je_i)$ on $M$ and the associated frame
$$
\e_i \= \tfrac1{\sqrt2}(e_i-\ti Je_i) \,\, , \quad \e_{\bar{i}} \= \tfrac1{\sqrt2}(e_i+\ti Je_i) \,\, ,
$$
so that
$$
g = \d_{\bar{j}i} \, \e^i \odot \e^{\bar{j}} \,\, , \quad 
\rho_g(h) = h_{\bar{j}i} \,\ti\, \e^i \wedge \e^{\bar{j}} \,\, , \quad
\a = \a_{\bar{j}i} \,\ti\, \e^i \wedge \e^{\bar{j}} \,\, ,
$$
where $h_{\bar{j}i} = \d_{\bar{j}s}h^s_{{\phantom{s}}i}$. If $(g_t)_{\bar{j}i} \= \d_{\bar{j}i} +th_{\bar{j}i}$ and $((g_t)^{i\bar{j}}) \= ((g_t)_{\bar{j}i})^{-1}$, it follows that
$$
(g_t)^{i\bar{j}}= \d^{i\bar{j}} -t g^{i\bar{r}}h_{\bar{r}s}(g_t)^{s\bar{j}} \,\, .
$$
Since $\Tr^{\bC}_g(\a) = \d^{i\bar{j}}\a_{\bar{j}i}$, we get
$$
(\Tr^{\bC})'(\a) = \lim_{t\to0}\tfrac1t(\Tr^{\bC}_{g_t}(\a)-\Tr^{\bC}_g(\a)) = 
\lim_{t\to0}\tfrac1t\big((g_t)^{i\bar{j}}-\d^{i\bar{j}}\big)\a_{\bar{j}i} = 
-h^{i\bar{j}}\a_{\bar{j}i} \,\, ,
$$
where $h^{i\bar{j}}= h^i_{{\phantom{i}}s}\d^{s\bar{j}}$. On the other hand, one can directly check that
$$
g(h,\rho_g^{-1}(\a)) = 2h^{i\bar{j}}\a_{\bar{j}i} \,\, ,
$$
which completes the proof. \end{proof}

\begin{prop}
The differential at $g$ in the direction of $h$ of the Chern-curature operator is given by
\begin{equation} \label{Rlin}
2\,\W'(X,Y) = [\W(X,Y),h] +J \circ \big(\n_X\n_{JY}h -\n_Y\n_{JX}h -\n_{J[X,Y]}h\big) \,\, .
\end{equation}
\end{prop}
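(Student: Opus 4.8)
The plan is to differentiate the defining relation \eqref{Chern-curv}, viewed as an identity between operators on $\cC^{\infty}(M,TM)$,
$$
\W(g_t)(X,Y) = \n^{g_t}_{[X,Y]} - \n^{g_t}_X\circ\n^{g_t}_Y + \n^{g_t}_Y\circ\n^{g_t}_X \,\, ,
$$
at $t=0$. Since $\tfrac{d}{dt}\big|_{0}\n^{g_t}=\n'$ is the $(1,2)$-tensor of \eqref{nablalin}, the product rule gives
$$
\W'(X,Y) = \n'_{[X,Y]} - \n'_X\circ\n_Y - \n_X\circ\n'_Y + \n'_Y\circ\n_X + \n_Y\circ\n'_X \,\, .
$$
The crucial observation is that, for a fixed vector field $V$, the operator $\n'_V\colon\cC^{\infty}(M,TM)\to\cC^{\infty}(M,TM)$ is of order zero: by \eqref{nablalin} it is $Y\mapsto\tfrac12\,C_V(Y)$, where I set $C_V\=(\n_V h)-J\circ(\n_{JV}h)\in\cC^{\infty}(M,\End(TM))$, a field tensorial in $V$. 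Hence Leibniz corrections arise only from the genuine covariant derivatives $\n_X,\n_Y$: as endomorphism fields one has $\n_Y\circ\n'_X-\n'_X\circ\n_Y=\tfrac12\,\n_Y(C_X)$ and $\n_X\circ\n'_Y-\n'_Y\circ\n_X=\tfrac12\,\n_X(C_Y)$, where $\n_Y(C_X)$ denotes the covariant derivative of the endomorphism field $C_X$ with $X$ held fixed. Substituting collapses the first variation of the Chern curvature to
$$
2\,\W'(X,Y) = C_{[X,Y]} - \n_X(C_Y) + \n_Y(C_X) \,\, .
$$

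The remaining step is algebraic. Expanding $C_V=\n_V h-J\circ\n_{JV}h$ and using that the Chern connection is Hermitian, so $\n J=0$ and $J$ commutes with every covariant derivative, I would split the right-hand side into a part with no $J$,
$$
\n_{[X,Y]}h-\n_X(\n_Y h)+\n_Y(\n_X h) \,\, ,
$$
and $J$ applied to $\n_X(\n_{JY}h)-\n_Y(\n_{JX}h)-\n_{J[X,Y]}h$, which is precisely the second summand of \eqref{Rlin} once one reads $\n_X\n_{JY}h$ as $\n_X(\n_{JY}h)$. The $J$-free part equals the curvature at $(X,Y)$ of the connection induced by $\n$ on $\End(TM)$ (in the sign convention of \eqref{Chern-curv}) evaluated on the section $h$; since this induced curvature acts on $h$ by $h\mapsto[\W(X,Y),h]$, the $J$-free part is $[\W(X,Y),h]$. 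Assembling the two parts gives \eqref{Rlin}.

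The argument is essentially bookkeeping and I do not expect any substantial obstacle. The two points that demand care are: keeping the zeroth-order operator $\n'_V$ strictly apart from the first-order operator $\n_V$, so that Leibniz terms are neither over- nor under-counted; and tracking the sign convention in \eqref{Chern-curv} so that the $\End(TM)$-curvature contributes $+[\W(X,Y),h]$ and not its opposite. As an independent verification, one may check the identity pointwise with coordinate vector fields $X,Y$, for which $[X,Y]=0$ and hence $2\,\W'(X,Y)=\n_Y(C_X)-\n_X(C_Y)$, recovering \eqref{Rlin} after expanding in second covariant derivatives through $\n_X Y-\n_Y X=[X,Y]+T(X,Y)$.
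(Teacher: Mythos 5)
Your proposal is correct and follows essentially the same route as the paper: differentiate the defining relation \eqref{Chern-curv}, observe that $\n'_V$ is a zeroth-order (tensorial) operator so that the commutators $[\n_X,\n'_Y]$ collapse to covariant derivatives of the endomorphism fields $\tfrac12 C_Y$, use $\n J=0$ to separate the $J$-free and $J$-twisted parts, and identify the $J$-free part with the curvature of the induced connection on $\End(TM)$ acting as $h\mapsto[\W(X,Y),h]$. The paper performs exactly these steps, only more tersely.
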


\begin{proof}
By differentiating \eqref{Chern-curv}, we get
$$
\W'(X,Y) = \n'_{[X,Y]}-[\n_X,\n'_Y]-[\n'_X,\n_Y] \,\, .
$$
Notice that
$$
2[\n_X,\n'_Y] = \n_X\n_Yh -J \circ \n_X\n_{JY}h
$$
and so we get
$$\begin{aligned}
2\,\W'(X,Y) &= \n_{[X,Y]}h - J \circ \n_{J[X,Y]}h - \n_X\n_Yh + J \circ \n_X\n_{JY}h + \n_Y\n_Xh - J \circ \n_Y\n_{JX}h \\
&= \W_{X,Y}h +J \circ \big(\n_X\n_{JY}h -\n_Y\n_{JX}h -\n_{J[X,Y]}h\big) \,\, .
\end{aligned}$$
Here, by a slight abuse of notation, we denoted by $\W_{X,Y}$ the curvature of the connection naturally induced on the bundle $\End(TM)$.
Since $\W_{X,Y}h = [\W(X,Y),h]$, we obtain \eqref{Rlin}.
\end{proof}

\begin{lemma}
For any vector field $X$ it holds that
\begin{equation} \label{dertrace}
\tr^{\bC}(\n_Xh)=\eL_X(\tr^{\bC}h) \,\, .
\end{equation}
\end{lemma}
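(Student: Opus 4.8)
The plan is to read the statement off from the two defining parallelism properties of the Chern connection, $\n g=0$ and $\n J=0$. Since $\n J=0$, the connection $\n$ preserves the holomorphic tangent bundle $T^{1,0}M$ and hence induces a connection on it and on its endomorphism bundle; taking the trace of a $\bC$-linear endomorphism commutes with covariant differentiation, so the fibrewise operation $\tr^{\bC}\colon \Sym^{1,1}(TM)\to M\times\bR$ is a parallel bundle morphism onto the trivial flat line bundle. Applying this to $h$ and noting that the induced connection on the target is the flat one, we get $\tr^{\bC}(\n_Xh)=\n_X(\tr^{\bC}h)=\eL_X(\tr^{\bC}h)$, which is exactly \eqref{dertrace}.

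To turn this into an explicit check, fix a local $(J,g)$-unitary frame $(e_i,Je_i)$ and the associated complex frame $\e_i\=\tfrac1{\sqrt2}(e_i-\ti Je_i)$, $\e_{\bar i}\=\overline{\e_i}$, so that $g(\e_i,\e_{\bar j})=\d_{\bar j i}$ and $\tr^{\bC}h=\sum_i g(h(\e_i),\e_{\bar i})$. Because $\n J=0$, the connection preserves the splitting $TM^{\bC}=T^{1,0}M\oplus T^{0,1}M$, so there is a matrix of local $1$-forms $\w_i^{{\phantom{i}}j}$ with $\n_X\e_i=\w_i^{{\phantom{i}}j}(X)\,\e_j$, and conjugating gives $\n_X\e_{\bar i}=\overline{\w_i^{{\phantom{i}}j}(X)}\,\e_{\bar j}$. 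Differentiating the constant functions $g(\e_i,\e_{\bar j})=\d_{\bar j i}$ and using $\n g=0$ shows that $\w$ is skew-Hermitian, i.e. $\w_i^{{\phantom{i}}j}=-\overline{\w_j^{{\phantom{j}}i}}$. Finally, $[h,J]=0$ forces $h(\e_i)\in T^{1,0}M$, so we may set $h_{i\bar j}\=g(h(\e_i),\e_{\bar j})$.

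Now, using $\n g=0$ and the Leibniz rule,
\[
\eL_X\big(\tr^{\bC}h\big)=\sum_i \eL_X\big(g(h(\e_i),\e_{\bar i})\big)=\sum_i g\big((\n_Xh)(\e_i),\e_{\bar i}\big)+\sum_i\big(g(h(\n_X\e_i),\e_{\bar i})+g(h(\e_i),\n_X\e_{\bar i})\big).
\]
The first sum on the right-hand side is precisely $\tr^{\bC}(\n_Xh)$, so it remains to see that the remaining terms cancel: they equal $\sum_{i,j}\big(\w_i^{{\phantom{i}}j}(X)\,h_{j\bar i}+\overline{\w_i^{{\phantom{i}}j}(X)}\,h_{i\bar j}\big)$, and inserting $\overline{\w_i^{{\phantom{i}}j}}=-\w_j^{{\phantom{j}}i}$ into the second summand and relabelling $i\leftrightarrow j$ turns it into $-\sum_{i,j}\w_i^{{\phantom{i}}j}(X)\,h_{j\bar i}$, which is the negative of the first. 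This yields \eqref{dertrace}. I do not expect any genuine obstacle here; the only thing requiring care is the bookkeeping of index positions and of the conjugation conventions for the unitary frame, all of which are already fixed in the Complex Linear Algebra subsection.
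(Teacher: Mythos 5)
Your proof is correct, and it takes a genuinely different route from the paper's. The paper works with a real orthonormal frame $(\tilde{e}_{\a})$ chosen normal at a point for the Levi-Civita connection, expresses $\n_X\tilde{e}_{\a}$ there via the defining relation $g(\n_XY,Z)=g(D^g_XY,Z)-\tfrac12\diff\w(JX,Y,Z)$, and kills the correction term $\sum_{\a,\b}\diff\w(JX,\tilde{e}_{\a},\tilde{e}_{\b})\,g(h(\tilde{e}_{\a}),\tilde{e}_{\b})$ by contracting an antisymmetric form against the symmetric tensor $g(h(\cdot),\cdot\cdot)$, finally converting $\tr^{\bR}$ into $2\tr^{\bC}$. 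You never invoke the explicit torsion potential: you use only the characterizing properties $\n J=0$ (so $\n$ preserves $T^{1,0}M$ and the fibrewise trace is a parallel bundle map) and $\n g=0$ (to expand $\eL_X$ and to get skew-Hermitian connection forms), and your cancellation is in essence $\tr([\w(X),h])=0$. What your route buys is generality and a one-line conceptual reason --- the identity holds verbatim for any Hermitian connection, not just the Chern connection; what the paper's route buys is that it stays within the real-frame computations used throughout the surrounding lemmas. The only point worth making explicit in your frame computation is that $\n_Xh\in\cC^{\infty}(M,\Sym^{1,1}(TM))$ (immediate from $\n g=0$ and $\n J=0$), so that $\sum_i g((\n_Xh)(\e_i),\e_{\bar{i}})$ really is $\tr^{\bC}(\n_Xh)$ in the paper's conventions --- the same remark the paper makes at the end of its own proof when passing from $\tr^{\bR}(\n_Xh)$ to $2\tr^{\bC}(\n_Xh)$.
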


\begin{proof}
Fix $x \in M$ and let $(\tilde{e}_{\a})$ be a local orthonormal frame around $x$ such that $(D\tilde{e}_{\a})_x=0$. Then, by using \eqref{defChern}, at the point $x$ we get
$$\begin{aligned}
\tr^{\bR}(\n_Xh) &= \sum_{\a} g((\n_Xh)(\tilde{e}_{\a}),\tilde{e}_{\a}) \\
&= \sum_{\a} g(\n_X(h(\tilde{e}_{\a})),\tilde{e}_{\a}) -g(h(\n_X\tilde{e}_{\a}),\tilde{e}_{\a}) \\
&= \sum_{\a} \eL_Xg(h(\tilde{e}_{\a}),\tilde{e}_{\a}) -2g(h(\tilde{e}_{\a}),\n_X\tilde{e}_{\a}) \\
&= \eL_X(\tr^{\bR}h)+\sum_{\a,\b}\diff\w(JX,\tilde{e}_{\a},\tilde{e}_{\b})g(h(\tilde{e}_{\a}),\tilde{e}_{\b}) \\
&= 2\eL_X(\tr^{\bC}h) \,\,.
\end{aligned}$$
Moreover, since $\n_Xh \in \cC^{\infty}(M,\Sym^{1,1}(TM))$, it holds that
$$\tr^{\bR}(\n_Xh) = 2\tr^{\bC}(\n_Xh)$$
and so we get the thesis.
\end{proof}

\begin{prop}
The differential at $g$ in the direction of $h$ of the first Chern-Ricci form is given by
\begin{equation} \label{S1lin}
\tilde{S}{}' = \tfrac12\diff\diff^{\,c}(\tr^{\bC}h) \,\, .
\end{equation}
\end{prop}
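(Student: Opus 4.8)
The plan is to differentiate the definition \eqref{eq:def-tildeS} of the first Chern-Ricci form directly. The crucial observation is that, although the curvature endomorphism $\W(g)(X,Y)$ depends on the metric, both the complex structure $J$ and the complex trace $\tr^{\bC}$ --- which here is the trace of an endomorphism of the holomorphic tangent bundle, and so is metric-independent, in contrast to the operator $\Tr^{\bC}_g$ acting on $(1,1)$-forms --- do not. Hence, from $\tilde{S}(g)(X,Y)=-\tr^{\bC}(J\circ\W(g)(X,Y))$ we get at once $\tilde{S}{}'(X,Y)=-\tr^{\bC}(J\circ\W'(X,Y))$, and it remains to substitute the linearized curvature formula \eqref{Rlin}.

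Substituting \eqref{Rlin}, two types of terms appear. The commutator term contributes $-\tfrac12\tr^{\bC}(J\circ[\W(X,Y),h])$; since $\W(X,Y)$ and $h$ both commute with $J$, one may move $J$ across to rewrite it as $[\W(X,Y),J\circ h]$, a commutator of endomorphisms of the holomorphic tangent bundle, whose trace therefore vanishes. The three remaining terms carry a factor $J\circ J=-\Id$, so after simplification one is left with $2\,\tilde{S}{}'(X,Y)=\tr^{\bC}\!\big(\n_X\n_{JY}h-\n_Y\n_{JX}h-\n_{J[X,Y]}h\big)$.

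To finish, I would invoke the auxiliary identity \eqref{dertrace} twice. Since the Chern connection satisfies $\n g=0$ and $\n J=0$, it preserves the subbundle $\Sym^{1,1}(TM)$, so $\n_{JY}h$ is again a section of $\Sym^{1,1}(TM)$ and \eqref{dertrace} applies to it as well; iterating gives $\tr^{\bC}(\n_X\n_{JY}h)=\eL_X(\tr^{\bC}(\n_{JY}h))=\eL_X\eL_{JY}(\tr^{\bC}h)$, and similarly for the other two terms. Collecting, $2\,\tilde{S}{}'(X,Y)=\eL_X\eL_{JY}(\tr^{\bC}h)-\eL_Y\eL_{JX}(\tr^{\bC}h)-\eL_{J[X,Y]}(\tr^{\bC}h)$, which by \eqref{eq:ddcu} equals $\diff\diff^{\,c}(\tr^{\bC}h)(X,Y)$. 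This is exactly \eqref{S1lin}.

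The computation is short, and the only step requiring care is the bookkeeping around the complex trace: one must be sure that $\tr^{\bC}$ applied to $J$-commuting endomorphisms is metric-independent (so that differentiation passes through it), that the commutator term genuinely becomes a commutator of endomorphisms of the holomorphic tangent bundle after moving $J$ across, and that $\n_{JY}h$ still lies in $\Sym^{1,1}(TM)$ so that \eqref{dertrace} may legitimately be reused. Everything else is mechanical substitution.
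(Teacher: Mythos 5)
Your proposal is correct and follows essentially the same route as the paper: differentiate the definition \eqref{eq:def-tildeS}, substitute \eqref{Rlin}, observe that the commutator term dies under the complex trace, and then apply \eqref{dertrace} twice together with \eqref{eq:ddcu}. The only (immaterial) difference is how the commutator term is killed --- you rewrite $J\circ[\W(X,Y),h]=[\W(X,Y),J\circ h]$ and use that traces of commutators vanish, while the paper notes that $[\W(X,Y),h]$ is a traceless section of $\Sym^{1,1}(TM)$ so that $J\circ[\W(X,Y),h]$ lies in $\Skew^{1,1}(TM)$.
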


\begin{proof}
Notice that the trace $\tr^{\bC}$ commutes with the differentiation. Moreover, it is straightforward to check that, for any vector fields $X,Y$ and $h \in \cC^{\infty}(M,\Sym^{1,1}(TM))$, the commutator $[\W(X,Y),h]$ is a section of $\Sym^{1,1}(TM)$, with zero trace. Therefore, $J \circ [\W(X,Y),h] \in \cC^{\infty}(M,\Skew^{1,1}(TM))$, and so, by means of \eqref{eq:def-tildeS}, \eqref{Rlin}, \eqref{dertrace}, and \eqref{eq:ddcu}, we obtain
$$\begin{aligned}
\tilde{S}{}'(X,Y) &= \tfrac12\tr^{\bC}\!\big(\n_X\n_{JY}h -\n_Y\n_{JX}h -\n_{J[X,Y]}h\big) \\
&= \tfrac12\big(\eL_X\eL_{JY}(\tr^{\bC}h) -\eL_Y\eL_{JX}(\tr^{\bC}h) -\eL_{J[X,Y]}(\tr^{\bC}h)\big) \\
&= \tfrac12\diff\diff^{\,c}(\tr^{\bC}h)(X,Y)
\end{aligned}$$
and so we get \eqref{S1lin}.
\end{proof}

As a direct consequence of \eqref{eq:diffTrC} and \eqref{S1lin}, and by \eqref{ChernLapl}, we get

\begin{prop}\label{prop:first-variation-scal}
The differential at $g$ in the direction of $h$ of the Chern-scalar curvature is given by
\begin{equation} \label{slin}
(\scal^{\rm Ch})' = \D_g(\tr^{\bC}h)+g(\diff\,(\tr^{\bC}h),\q) -g(h,S(g)) \,\, .
\end{equation}
\end{prop}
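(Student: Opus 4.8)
The plan is to differentiate the defining identity \eqref{eq:def-scal}, namely $\scal^{\rm Ch}(g)=2\Tr^{\bC}_g(\tilde S(g))$, by the Leibniz rule. Since the complex trace $\Tr^{\bC}_g$ depends on the metric through $\rho_g^{-1}$ while $\tilde S(g)$ is itself a metric-dependent $(1,1)$-form, differentiating the product along the path $(g_t)$ gives
\begin{equation}
(\scal^{\rm Ch})' = 2\,(\Tr^{\bC})'\big(\tilde S(g)\big) + 2\,\Tr^{\bC}_g\big(\tilde S{}'\big) \,\, ,
\end{equation}
and I would then handle the two summands separately.

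For the first summand, I apply \eqref{eq:diffTrC} with $\a=\tilde S(g)$, which yields $2\,(\Tr^{\bC})'(\tilde S(g)) = -g\big(h,\rho_g^{-1}(\tilde S(g))\big) = -g(h,S(g))$, using the definition $S(g)=\rho_g^{-1}(\tilde S(g))$ of the first Chern-Ricci symmetric endomorphism. For the second summand, I substitute the first variation of the first Chern-Ricci form \eqref{S1lin}, i.e. $\tilde S{}' = \tfrac12\diff\diff^{\,c}(\tr^{\bC}h)$, obtaining $2\,\Tr^{\bC}_g(\tilde S{}') = \Tr^{\bC}_g\big(\diff\diff^{\,c}(\tr^{\bC}h)\big)$; applying the Chern-Laplace identity \eqref{ChernLapl} to the function $u=\tr^{\bC}h$ then gives $\Tr^{\bC}_g\big(\diff\diff^{\,c}(\tr^{\bC}h)\big) = \D_g(\tr^{\bC}h) + g(\diff(\tr^{\bC}h),\q)$. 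Adding the two contributions produces \eqref{slin}.

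Since all the ingredients — the first variations \eqref{eq:diffTrC} and \eqref{S1lin} and the identity \eqref{ChernLapl} — have already been established, there is no substantial obstacle here. The only point requiring minor care is the correct bookkeeping in the Leibniz rule, namely that $(\Tr^{\bC})'$ in \eqref{eq:diffTrC} is to be evaluated at the \emph{fixed} form $\tilde S(g)$ (the value of $\tilde S$ at the base metric $g$, not at its variation $\tilde S{}'$), and that the trace appearing in the second summand is the trace at $g$ applied to the varying form $\tilde S{}'$.
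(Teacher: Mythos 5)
Your proposal is correct and follows exactly the route the paper takes: the paper states Proposition \ref{prop:first-variation-scal} as "a direct consequence of \eqref{eq:diffTrC} and \eqref{S1lin}, and by \eqref{ChernLapl}," which is precisely your Leibniz-rule decomposition with the two summands handled as you describe. Your write-up merely makes the implicit bookkeeping explicit; there is nothing to add or correct.
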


\subsection{Second variation of the Chern-scalar curvature} \hfill \par

We are going to compute the second variation of the Chern-scalar curvature at a fixed point $g \in \eM_{\rm H}$. Let $g_t = g((\Id+th)\,\cdot,\cdot\cdot)$ as before. From \cite{besse}, and by recalling the definition of $\d_g$ in \eqref{formula:div}, we have

\begin{prop}[{see \cite[Prop 1.184]{besse}}]
The differential at $g$ in the direction of $h$ of the Laplace-Beltrami operator is given by
\begin{equation} \label{Deltalin}
\D'u = g(\Hess_g(u),h) +g(\diff u, \d_gh) -g(\diff u, \diff\,(\tr^{\bC}h)) \,\, .
\end{equation}
\end{prop}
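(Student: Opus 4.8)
This is the classical first-variation formula of the Laplace--Beltrami operator, so one option is simply to invoke \cite[Prop 1.184]{besse} after recording that, along the path $g_t=g((\Id+th)\,\cdot,\cdot\cdot)$, the infinitesimal variation $\dot g_{0}$ is the symmetric bilinear form $g(h\,\cdot,\cdot)$, whose $g$-trace is $\tr^{\bR}h=2\tr^{\bC}h$. For completeness I would give the short direct argument. The starting point is the identity $\D_{g}u=-\tr^{\bR}\!\big(\Hess_{g}(u)\big)=-\tr_g\!\big(D^g\diff u\big)$, where $D^g\diff u$ denotes the Hessian viewed as the $(0,2)$-tensor $(X,Y)\mapsto\eL_X\eL_Yu-\eL_{D^g_XY}u$ and $\tr_g$ is its trace with respect to $g$; this is consistent with the sign convention fixed by \eqref{ChernLapl}, since for $g$ K\"ahler \eqref{ddcKahler} gives $\Tr^{\bC}_g(\diff\diff^{\,c}u)=-\tr^{\bR}(\Hess_g(u))$, matching \eqref{ChernLapl} with $\q=0$. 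Along $g_t$ the one-form $\diff u$ does not depend on $t$, so $D^{g_t}\diff u$ varies only through the Levi--Civita connection while $\tr_{g_t}$ varies only through $g_t^{-1}$; by the product rule,
\[
\D'u=-\Big(\tfrac{d}{dt}\big|_{0}\tr_{g_t}\Big)\!\big(D^g\diff u\big)\;-\;\tr_g\!\Big(\tfrac{d}{dt}\big|_{0}D^{g_t}\diff u\Big).
\]

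For the first term, $\dot g_0=g(h\,\cdot,\cdot)$ gives $\tfrac{d}{dt}\big|_0 g_t^{-1}=-h$ (after raising indices with $g$), hence $\tfrac{d}{dt}\big|_0\tr_{g_t}(T)=-g(h,T)$ for every fixed symmetric $(0,2)$-tensor $T$, and the first term equals $g\big(h,\Hess_g(u)\big)$, the first summand of \eqref{Deltalin}. For the second term I would use the standard variation formula for the Levi--Civita connection: setting $\G'\=\tfrac{d}{dt}\big|_0(D^{g_t}-D^g)$, one has $2\,g(\G'(X,Y),Z)=(D^g_X\dot g_0)(Y,Z)+(D^g_Y\dot g_0)(X,Z)-(D^g_Z\dot g_0)(X,Y)$, and because $D^g g=0$ every summand is $g$ of a covariant derivative of $h$. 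Differentiating $(D^{g_t}\diff u)(X,Y)=\eL_X\eL_Yu-\eL_{D^{g_t}_XY}u$ at $t=0$ yields $\tfrac{d}{dt}\big|_0 D^{g_t}\diff u=-\diff u\circ\G'$, so $\tr_g\big(\tfrac{d}{dt}\big|_0 D^{g_t}\diff u\big)=-\diff u\big(\sum_\a\G'(\tilde{e}_{\a},\tilde{e}_{\a})\big)$ for a local $g$-orthonormal frame $(\tilde{e}_{\a})$.

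It remains to identify $\sum_\a\G'(\tilde{e}_{\a},\tilde{e}_{\a})$. Putting $X=Y=\tilde{e}_{\a}$ in the formula for $\G'$ and summing, and using the symmetry of $D^g_W h$, the definition \eqref{formula:div} of $\d_g$, and the identities $\tr^{\bR}(D^g_Z h)=\eL_Z(\tr^{\bR}h)=2\eL_Z(\tr^{\bC}h)=2\,\diff(\tr^{\bC}h)(Z)$, one obtains $g\big(\sum_\a\G'(\tilde{e}_{\a},\tilde{e}_{\a}),Z\big)=(\d_g h)(Z)-\diff(\tr^{\bC}h)(Z)$. Substituting, the second term of $\D'u$ contributes $g\big(\diff u,\d_g h\big)-g\big(\diff u,\diff(\tr^{\bC}h)\big)$, and summing the two contributions gives \eqref{Deltalin}. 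The only delicate point is the bookkeeping of sign conventions — that $\D_g$ here is the nonnegative Laplacian $-\tr^{\bR}\Hess_g$, and that $\d_g$ of \eqref{formula:div} is the unsigned divergence, hence differs by a sign from the codifferential appearing in \cite{besse} — together with tracing the connection-variation term correctly; once these are fixed the computation is routine.
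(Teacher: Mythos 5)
Your computation is correct, but it is worth noting that the paper itself gives no proof here: the proposition is stated with a bare citation to \cite[Prop 1.184]{besse}, so your "for completeness" argument is strictly more than what the paper records. The direct verification checks out: the splitting $\D_{g_t}u=-\tr_{g_t}(D^{g_t}\diff u)$ into a variation of the trace and a variation of the connection is the standard route; the identity $\tfrac{d}{dt}\big|_0\tr_{g_t}(T)=-g(h,T)$ follows from $(g_t)^{-1}=g^{-1}-th^{\sharp}+O(t^2)$; and your trace of the Koszul-type formula for $\G'$ correctly produces $g\big(\sum_\a\G'(\tilde e_\a,\tilde e_\a),Z\big)=(\d_gh)(Z)-\diff(\tr^{\bC}h)(Z)$, using the symmetry of $D^g_Zh$ and $\tr^{\bR}(D^g_Zh)=\eL_Z(\tr^{\bR}h)=2\,\diff(\tr^{\bC}h)(Z)$ (this last identity is immediate for the Levi--Civita connection since $D^gg=0$; note it is the torsion-free analogue of the lemma \eqref{dertrace} that the paper proves separately for the Chern connection). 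Your attention to the convention translation is the genuinely useful part of the write-up: the paper's $\D_g=(D^g)^*D^g=-\tr^{\bR}\Hess_g$ is the nonnegative Laplacian, its $\d_g$ from \eqref{formula:div} is the unsigned divergence (opposite in sign to Besse's $\delta$), and the appearance of $\tr^{\bC}h=\tfrac12\tr^{\bR}h$ in \eqref{Deltalin} absorbs the factor $\tfrac12$ in front of $\diff(\tr h)$ in Besse's statement; without these remarks a reader comparing \eqref{Deltalin} to \cite[Prop 1.184]{besse} would see an apparent sign discrepancy. So either of your two options (cite and translate, or compute directly) is adequate, and the direct computation buys self-containedness at essentially no cost.
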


By means of a straightforward computation in local coordinates, one can show that, for any $\a,\b \in \cC^{\infty}(M,T^*M)$ and $A,B \in \cC^{\infty}(M,\End(TM))$, we have
\begin{equation} \label{eq:lemmadiffg}
g(\a,\b)' = -g(\a \circ h,\b) \,\, , \quad g(A,B)' = 0 \,\, .
\end{equation}

For the following, we recall that $\d^{\n}$ has been defined in \eqref{formula:div}:
\begin{prop}
The differential at $g$ in the direction of $h$ of the Lee form is given by
\begin{equation} \label{diffLee}
\q' = \diff\,(\tr^{\bC}h) - \d^{\n}h \,\, .
\end{equation}
\end{prop}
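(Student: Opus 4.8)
The plan is to obtain $\q'$ directly from the variation of the Chern connection, exploiting the first description of the Lee form in \eqref{eq:def-lee}: for every metric, $\q(X)$ is the ordinary real trace $\tr^{\bR}$ of the torsion endomorphism $Y\mapsto T(X,Y)$, and such a trace does not involve the metric. Hence, along the path $g_t = g((\Id+th)\,\cdot,\cdot\cdot)$, one has $\q_t(X)=\tr^{\bR}\!\big(T^{g_t}(X,\cdot)\big)$ for every $t$, so that $\q'(X)=\tr^{\bR}\!\big(T'(X,\cdot)\big)$, where $T'(X,Y)=\n'_XY-\n'_YX$ because the Lie bracket contribution to the torsion is independent of $t$.

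Substituting \eqref{nablalin} gives
$$
2\,T'(X,Y) = (\n_Xh)(Y) - (J\circ\n_{JX}h)(Y) - (\n_Yh)(X) + (J\circ\n_{JY}h)(X) \,\, ,
$$
and I would then take the real trace in the variable $Y$ against a local $(J,g)$-unitary frame $(\tilde{e}_\a)=(e_i,Je_i)$, splitting the result into four pieces. The first, $\tr^{\bR}(\n_Xh)$, equals $2\tr^{\bC}(\n_Xh)=2\,\eL_X(\tr^{\bC}h)$: indeed $\n_Xh\in\cC^{\infty}(M,\Sym^{1,1}(TM))$ since the Chern connection satisfies $\n g=0$ and $\n J=0$, so \eqref{dertrace} applies. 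The second, $-\tr^{\bR}(J\circ\n_{JX}h)$, vanishes because $J\circ\n_{JX}h\in\Skew^{1,1}(TM)$ is $g$-skew-symmetric and hence trace-free. The third, $-\sum_\a g\big((\n_{\tilde{e}_\a}h)(X),\tilde{e}_\a\big)$, is exactly $-(\d^{\n}h)(X)$ by the definition \eqref{formula:div}.

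The only step requiring a bit of care is the fourth piece, $\sum_\a g\big((J\circ\n_{J\tilde{e}_\a}h)(X),\tilde{e}_\a\big)$: using $g(J\,\cdot,\cdot\cdot)=-g(\cdot,J\,\cdot\cdot)$ it becomes $-\sum_\a g\big((\n_{J\tilde{e}_\a}h)(X),J\tilde{e}_\a\big)$, and since $(J\tilde{e}_\a)$ is again a $(J,g)$-unitary frame and $\d^{\n}h$ is defined through a trace, this also equals $-(\d^{\n}h)(X)$. Summing the four contributions gives $2\q'(X)=2\,\diff\,(\tr^{\bC}h)(X)-2(\d^{\n}h)(X)$, which is \eqref{diffLee}. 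I do not expect a genuine obstacle here: once \eqref{nablalin} is in hand the computation is routine, the only delicate points being the bookkeeping of the complex structure inside the traces and the recognition of the last two terms as the operator $\d^{\n}h$. (One could alternatively differentiate the second expression $\q=\Tr^{\bC}_g(X\lrcorner\diff\w)$ using \eqref{eq:diffTrC}, but this is less direct because the complex trace itself then varies.)
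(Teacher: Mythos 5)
Your proposal is correct and is essentially the paper's own argument: both differentiate $\q(X)=\tr^{\bR}(T(X,\cdot))$, substitute \eqref{nablalin}, use \eqref{dertrace} together with the tracelessness of the skew part to get $\diff(\tr^{\bC}h)$, and identify the remaining trace (after the frame manipulation with $J$, which you carry out correctly) as $\d^{\n}h$. The only difference is cosmetic bookkeeping: the paper groups the four terms as $\tr^{\bR}(\n'_X(\cdot))-\tr^{\bR}(\n'_{(\cdot)}X)$ rather than treating them separately.
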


\begin{proof}
By the very definition of Lee form \eqref{eq:def-lee}, we have
$$
\q'(X) = \tr^{\bR}\!\big(T'(X,\cdot)\big) =\tr^{\bR}\!\big(\n'_X(\cdot)\big) -\tr^{\bR}\!\big(\n'_{(\cdot)}X\big) \,\, .
$$
Then, from \eqref{nablalin} and \eqref{dertrace}, it follows that
$$
\tr^{\bR}\!\big(\n'_X(\cdot)\big) = \tfrac12\tr^{\bR}\!\big(\n_Xh\big) -\tfrac12\tr^{\bR}\!\big(J \circ (\n_{JX}h)\big) = \eL_X(\tr^{\bC}h) \,\, .
$$
Moreover, given a local $(J,g)$-unitary frame $(e_i,Je_i)$, from \eqref{nablalin} and \eqref{formula:div} we get
$$\begin{aligned}
\tr^{\bR}\!\big(\n'_{(\cdot)}X\big) &= \tfrac12 \sum_{1 \leq i \leq m} \big(g((\n_{e_i}h)(X),e_i) +g((\n_{Je_i}h)(X),Je_i) \\
&\hskip 55pt  -g((\n_{Je_i}h)(JX),e_i) +g((\n_{e_i}h)(JX),Je_i)\big) \\
&= \sum_{1 \leq i \leq m} g((\n_{e_i}h)(X),e_i) +g((\n_{Je_i}h)(X),Je_i) \\
&=(\d^{\n}h)(X)
\end{aligned}$$
and so this proves Formula \eqref{diffLee}.
\end{proof}

\begin{prop}
The differential at $g$ in the direction of $h$ of the first Chern-Ricci symmetric endomorphism is given by
\begin{equation} \label{S1symlin}
g(S{}'(X),Y) = -g((h \circ S(g))(X),Y) +\tfrac12\diff\diff^{\,c}(\tr^{\bC}h)(X,JY) \,\, .
\end{equation}
\end{prop}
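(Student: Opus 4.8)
The plan is to obtain the formula for $S' = (S(g))'_g(h)$ by differentiating the defining relation $S(g) = \rho_g^{-1}(\tilde S(g))$, equivalently $\rho_g(S(g)) = \tilde S(g)$, along the path $g_t = g((\Id+th)\,\cdot,\cdot\cdot)$. First I would record the first variation of the isomorphism $\rho_g$. By definition $\rho_g(A) = g((J\circ A)\,\cdot,\cdot\cdot)$, so for a fixed endomorphism $A \in \cC^\infty(M,\Sym^{1,1}(TM))$ one has $\rho_{g_t}(A) = g_t((J\circ A)\,\cdot,\cdot\cdot) = g(((\Id+th)\circ J\circ A)\,\cdot,\cdot\cdot)$, and differentiating at $t=0$ gives $(\rho_\cdot(A))' = g((h\circ J\circ A)\,\cdot,\cdot\cdot)$. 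Since $h$ is $J$-invariant, $h\circ J = J\circ h$, so this equals $\rho_g(h\circ A)$. Hence, differentiating $\rho_g(S(g)) = \tilde S(g)$ and using the product rule,
\[
\rho_g(h\circ S(g)) + \rho_g(S') = \tilde S{}' \,\, ,
\]
so that $\rho_g(S') = \tilde S{}' - \rho_g(h\circ S(g))$, i.e. $S' = \rho_g^{-1}(\tilde S{}') - h\circ S(g)$.

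Next I would feed in the two ingredients already proved: the first variation of the first Chern-Ricci form, $\tilde S{}' = \tfrac12\diff\diff^{\,c}(\tr^{\bC}h)$ from \eqref{S1lin}, and the (trivial) definition of $\rho_g^{-1}$, namely $g(\rho_g^{-1}(\a)(X),Y) = -\a(JX,Y)$ — equivalently $\rho_g^{-1}(\a) = -J^{-1}\circ(\text{the endomorphism metrically dual to }\a)$. Concretely, writing everything out with the metric pairing, $g(\rho_g^{-1}(\tilde S{}')(X),Y) = \tilde S{}'(X,JY) = \tfrac12\diff\diff^{\,c}(\tr^{\bC}h)(X,JY)$, using that $\diff\diff^{\,c}(\tr^{\bC}h)$ is of type $(1,1)$ so that inserting $JY$ in the second slot reproduces the right sign. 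Combining with the previous display yields
\[
g(S'(X),Y) = -g((h\circ S(g))(X),Y) + \tfrac12\diff\diff^{\,c}(\tr^{\bC}h)(X,JY) \,\, ,
\]
which is exactly \eqref{S1symlin}.

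The only place requiring care — and the main (minor) obstacle — is pinning down the correct signs and placements of $J$ in the identities $g(\rho_g^{-1}(\a)(X),Y) = \tilde S{}'(X,JY)$ and in the first variation of $\rho_g$; this is purely a matter of tracking the conventions fixed in Section \ref{prel} (where $\rho_g(h) = g((J\circ h)\,\cdot,\cdot\cdot)$ and $\omega = g(J\cdot,\cdot)$), and is best verified in a local $(J,g)$-unitary frame $(e_i,Je_i)$ as in the proof of \eqref{eq:diffTrC}, where $\rho_g(h) = h_{\bar j i}\,\ti\,\e^i\wedge\e^{\bar j}$ and one simply reads off the variation of $(g_t)^{i\bar j}$. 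With the conventions matched, the computation is routine and the formula \eqref{S1symlin} follows.
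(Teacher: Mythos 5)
Your proposal is correct and is essentially the paper's own argument: the paper simply differentiates the equivalent pairing identity $g(S(g)(X),Y)=\tilde S(g)(X,JY)$, which is exactly your product-rule differentiation of $\rho_g(S(g))=\tilde S(g)$ combined with $g(\rho_g^{-1}(\a)(X),Y)=\a(X,JY)$, followed by \eqref{S1lin}. (Only your parenthetical rewriting $\rho_g^{-1}(\a)=-J^{-1}\circ(\cdot)$ has a sign slip, but it is never used.)
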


\begin{proof}
Differentiating both sides of $g(S(g)(X),Y) = \tilde{S}(g)(X,JY)$, we get
$$
g((h \circ S(g))(X),Y) +g(S{}'(X),Y) = \tilde{S}'(X,JY)\,\, .
$$
Therefore, by using \eqref{S1lin}, we get \eqref{S1symlin}.
\end{proof}

Finally, we are ready to prove the following

\begin{prop}\label{prop:second-variation-scal}
The second differential at $g$ in the direction of $(h_1,h_2)$ of the Chern-scalar curvature is given by
\begin{multline} \label{slin2}
(\scal^{\rm Ch})'' = 
g(\Hess_g(\tr^{\bC}h_1),h_2) +g(\diff\,(\tr^{\bC}h_1), \d_gh_2-\d^{\n}h_2)
-g(\diff\,(\tr^{\bC}h_1),\q \circ h_2) \\
+g(h_1,h_2\circ S(g)) -\tfrac12g(h_1,\rho_g^{-1}(\diff\diff^{\,c}(\tr^{\bC}h_2))) \,\, .
\end{multline}
Moreover, if $g$ is K\"ahler, then
\begin{equation} \label{slin2K}
(\scal^{\rm Ch})'' = 
g(\Hess_g(\tr^{\bC}h_1),h_2) +g(h_1,\Hess_g(\tr^{\bC}h_2)) +g(h_1,h_2\circ \Ric(g)) \,\, .
\end{equation}
\end{prop}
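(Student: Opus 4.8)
The plan is to differentiate the first variation formula \eqref{slin} once more along the path $g_t\=g((\Id+th_2)\,\cdot,\cdot\cdot)$, that is, to compute $(\scal^{\rm Ch})''=\tfrac{d}{dt}\big|_{t=0}\big[(\scal^{\rm Ch})'_{g_t}(h_1)\big]$, where by \eqref{slin}
\[
(\scal^{\rm Ch})'_{g_t}(h_1)=\D_{g_t}(\tr^{\bC}h_1)+g_t\big(\diff\,(\tr^{\bC}h_1),\q_t\big)-g_t\big(h_1,S(g_t)\big).
\]
Since $h_1$ is held fixed along the path, no second-order contribution of $(g_t)$ enters; the point to keep in mind is that $\tr^{\bC}h_1$ is metric-independent, hence constant in $t$, whereas $\D$, $\q$, $S$ and the metric pairings all vary. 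I would then handle the three summands one at a time. For the first, I would quote the known first variation \eqref{Deltalin} of the Laplace--Beltrami operator with $u=\tr^{\bC}h_1$ and deformation direction $h_2$, which gives $g(\Hess_g(\tr^{\bC}h_1),h_2)+g(\diff\,(\tr^{\bC}h_1),\d_gh_2)-g(\diff\,(\tr^{\bC}h_1),\diff\,(\tr^{\bC}h_2))$.

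For the second summand I would apply the Leibniz rule: the variation of the metric pairing on $1$-forms is the first identity in \eqref{eq:lemmadiffg}, and $\q'$ is given by \eqref{diffLee}; here I would also use the elementary fact $g(\a\circ h,\b)=g(\a,\b\circ h)$, valid for a $g$-symmetric endomorphism $h$, to bring the torsion contribution into the stated form. For the third summand the Leibniz rule produces $[g(A,B)']\big|_{A=h_1,\,B=S(g)}$, which vanishes by the second identity in \eqref{eq:lemmadiffg}, together with $g(h_1,S')$; I would compute the latter by pairing \eqref{S1symlin} (with deformation direction $h_2$) against $h_1$ in a local orthonormal frame, using that $\diff\diff^{\,c}v(X,JY)=g(\rho_g^{-1}(\diff\diff^{\,c}v)(X),Y)$, which is immediate from the definition of $\rho_g$. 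Adding the three contributions, the two terms containing $\diff\,(\tr^{\bC}h_2)$ cancel and \eqref{slin2} follows.

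For the Kähler specialization \eqref{slin2K} I would substitute $\q=0$, $\d^{\n}=\d_g$ (as $\n=D^g$) and $S(g)=\Ric(g)$ into \eqref{slin2}; this annihilates the torsion term and the term $g(\diff\,(\tr^{\bC}h_1),\d_gh_2-\d^{\n}h_2)$. The only step needing an extra line is the identity $-\tfrac12 g\big(h_1,\rho_g^{-1}(\diff\diff^{\,c}v)\big)=g(h_1,\Hess_g(v))$ for $v=\tr^{\bC}h_2$ and $h_1\in\Sym^{1,1}(TM)$: from \eqref{ddcKahler} one gets $\rho_g^{-1}(\diff\diff^{\,c}v)=-\big(\Hess_g(v)-J\circ\Hess_g(v)\circ J\big)$, and since $h_1$ commutes with $J$ one has $g(h_1,J\circ\Hess_g(v)\circ J)=-g(h_1,\Hess_g(v))$, whence the claim.

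The computation is essentially mechanical once the metric-dependence of each object is tracked correctly, so I do not expect a genuine obstacle; the mildly delicate points are the careful use of the Leibniz rule when several metric-dependent factors occur at once (the $g(h_1,S(g))$ term) and the Kähler reduction of $\rho_g^{-1}(\diff\diff^{\,c}\,\cdot)$ to the $J$-invariant part of the Hessian. As a consistency check, one may verify a posteriori that \eqref{slin2} is symmetric under $h_1\leftrightarrow h_2$, as the second differential must be.
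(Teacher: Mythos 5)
Your proposal is correct and follows essentially the same route as the paper: the paper likewise obtains \eqref{slin2} by differentiating \eqref{slin} and invoking \eqref{Deltalin}, \eqref{eq:lemmadiffg}, \eqref{diffLee} and \eqref{S1symlin}, and derives the K\"ahler case \eqref{slin2K} from \eqref{ddcKahler} exactly as you reduce $\rho_g^{-1}(\diff\diff^{\,c}v)$ to $-\big(\Hess_g(v)-J\circ\Hess_g(v)\circ J\big)$. Your term-by-term bookkeeping (including the cancellation of the two $g(\diff\,(\tr^{\bC}h_1),\diff\,(\tr^{\bC}h_2))$ contributions) matches the computation the paper leaves implicit.
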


\begin{proof}
Notice that \eqref{slin2} follows directly from \eqref{slin}, \eqref{Deltalin}, \eqref{eq:lemmadiffg}, \eqref{diffLee} and \eqref{S1symlin}. Assume now that $g$ is K\"ahler, then 
$$\d^{\n}=\d_g \,\, , \quad \q=0 \,\, , \quad S(g)=\Ric(g) \,\, .$$
Moreover, by \eqref{ddcKahler}, we get
$$\begin{aligned}
-\tfrac12g&(h_1,\rho_g^{-1}(\diff\diff^{\,c}(\tr^{\bC}h_2))) \\
&= -\tfrac12 \sum_{1 \leq \a,\b \leq 2m} g(h_1(\tilde{e}_{\a}),\tilde{e}_{\b})\diff\diff^{\,c}(\tr^{\bC}h_2)(\tilde{e}_{\a},J\tilde{e}_{\b}) \\
&= \tfrac12 \sum_{1 \leq \a,\b \leq 2m} \big(g(h_1(\tilde{e}_{\a}),\tilde{e}_{\b})g(\Hess_g(\tr^{\bC}h_2)(\tilde{e}_{\a}),\tilde{e}_{\b}) +g(h_1(\tilde{e}_{\a}),\tilde{e}_{\b})g(\Hess_g(\tr^{\bC}h_2)(J\tilde{e}_{\a}),J\tilde{e}_{\b})\big) \\
&= \sum_{1 \leq \a,\b \leq 2m} g(h_1(\tilde{e}_{\a}),\tilde{e}_{\b})g(\Hess_g(\tr^{\bC}h_2)(\tilde{e}_{\a}),\tilde{e}_{\b}) \\
&= g(h_1,\Hess_g(\tr^{\bC}h_2))
\end{aligned}$$
which concludes the proof of \eqref{slin2K}.
\end{proof}

\section{Main results}\label{sec:main}
\setcounter{equation} 0

Let again $(M,J)$ be a compact, connected, complex manifold of dimension $\dim_{\bR}M=2m$. In this section, we prove our main results, concerning the submersion and local surjectivity properties for the map $\scal^{\rm Ch} : \eM_{\rm H} \to \cC^{\infty}(M,\bR)$, in view of the relationship between infinitesimal and actual deformations of the Chern-scalar curvature function with respect to a varying metric (see \cite{fischer-marsden} for the Riemannian case).

\subsection{Submersion points for the Chern-scalar curvature} \hfill \par

We begin this section with the following striaghtforward

\begin{lemma}
For any integer $k\geq m+2$, the map $\scal^{\rm Ch} : \eM^{k+2}_{\rm H} \to W^{k,2}(M,\bR)$ is smooth.
\end{lemma}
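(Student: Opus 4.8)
The statement asserts that $\scal^{\rm Ch}$, viewed as a map between the Sobolev completions $\eM^{k+2}_{\rm H} \to W^{k,2}(M,\bR)$, is smooth in the Banach-space sense recalled in Section \ref{prel}. My plan is to decompose $\scal^{\rm Ch}$ into elementary building blocks, each of which is visibly a smooth map between appropriate Sobolev spaces, and then to invoke the chain rule together with the behaviour of multiplication and inversion on Sobolev algebras. The key analytic inputs are: (a) for $k \geq m+1$, the space $W^{k,2}(M,\bR)$ is a Banach algebra under pointwise multiplication (a consequence of the Sobolev embedding \eqref{embSob} into $\cC^0$, together with the standard Sobolev multiplication estimates), and more generally pointwise tensor contractions $W^{k,2}\times W^{k,2}\to W^{k,2}$ are continuous bilinear, hence smooth; (b) a linear differential operator of order $r$ with smooth coefficients extends to a bounded linear map $W^{k+r,2}\to W^{k,2}$, and bounded linear maps are smooth with constant derivative (this is exactly the extension statement quoted from \cite[Thm 6, p. 152]{palais}); (c) matrix inversion $g \mapsto g^{-1}$, restricted to the open set of positive-definite symmetric forms, is a smooth map $W^{k,2}\to W^{k,2}$ when $k\geq m+1$ — this follows because inversion is a smooth map of the entries on the open cone of invertible matrices and composition with a fixed smooth function of several variables preserves $W^{k,2}$ regularity (Sobolev composition lemma), the point being that $k\geq m+1$ keeps us in $\cC^0$ so the image stays in a fixed compact subcone.

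Concretely, I would trace through the definitions of Section \ref{prel}. Given $g\in\eM^{k+2}_{\rm H}$, the Christoffel symbols of the Chern connection $\n=\n^g$ are, by \eqref{defChern}, rational expressions in $g$ and its first derivatives with $g^{-1}$ appearing — so $g \mapsto \n^g$ is smooth from $\eM^{k+2}_{\rm H}$ into $W^{k+1,2}$-sections of the relevant bundle, using (a)+(c) and the fact that differentiation $W^{k+2,2}\to W^{k+1,2}$ is bounded linear. Iterating once more, the Chern curvature $\W(g)$ of \eqref{Chern-curv} involves one more derivative and products of Christoffel symbols, hence $g\mapsto\W(g)$ is smooth into $W^{k,2}$. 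The first Chern-Ricci form $\tilde S(g)$ of \eqref{eq:def-tildeS} is obtained by applying the fixed bundle map $-\tr^{\bC}(J\circ\cdot)$, which is linear and algebraic, hence smooth, and does not cost derivatives. Finally $\scal^{\rm Ch}(g)=2\Tr^{\bC}_g(\tilde S(g))$ by \eqref{eq:def-scal}, and $\Tr^{\bC}_g$ contracts $\tilde S(g)$ against $g^{-1}$ (as the formula $\Tr^{\bC}_g(\a)=\d^{i\bar j}\a_{\bar j i}$ makes explicit, with $\d^{i\bar j}$ replaced by the components of $g^{-1}$ in a non-unitary frame); this is a composition of the smooth inversion $g\mapsto g^{-1}$ with a continuous bilinear contraction, hence smooth into $W^{k,2}(M,\bR)$. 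Composing, $\scal^{\rm Ch}:\eM^{k+2}_{\rm H}\to W^{k,2}(M,\bR)$ is smooth.

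The only genuine subtlety — and the step I expect to require the most care — is bookkeeping the Sobolev indices so that at no stage does one fall below the algebra threshold $k\geq m+1$: the connection costs one derivative ($k+2\rightsquigarrow k+1$), the curvature a second ($k+1\rightsquigarrow k$), and since $k\geq m+2$ one has $k+1\geq m+1$ throughout, so all the products and the inversion stay within the range where $W^{\ell,2}$ is a Banach algebra and the composition/inversion lemmas apply. Once the indices are checked, everything reduces to the three cited facts (a)–(c) plus the chain rule in Banach spaces, so I would keep the write-up short: state that $\scal^{\rm Ch}$ is a composition of (i) bounded linear differential operators, (ii) pointwise algebraic bundle maps, (iii) pointwise multiplications/contractions, and (iv) the smooth inversion $g\mapsto g^{-1}$, each smooth between the relevant Sobolev spaces by the results recalled above, and conclude by the chain rule. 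No essentially new idea beyond the standard Sobolev calculus is needed; the lemma is ``straightforward'' precisely in this sense.
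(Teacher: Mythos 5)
Your proposal is correct and follows essentially the same route as the paper: the paper simply writes out the explicit local coordinate formula $\scal^{\rm Ch}(g) = -2g^{k\bar{\ell}}g^{i\bar{j}}g_{\bar{\ell}k,\bar{j}i} +2g^{k\bar{\ell}}g^{i\bar{j}}g^{s\bar{r}}g_{\bar{r}k,i}g_{\bar{\ell}s,\bar{j}}$, exhibiting $\scal^{\rm Ch}$ as a polynomial in $g^{-1}$ and the derivatives of $g$ up to order two, and then invokes the multiplicative properties of Sobolev spaces exactly as in your points (a)--(c). Your index bookkeeping and the role of the threshold $k\geq m+2$ match the paper's argument.
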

\begin{proof}
This follows directly by the local formula of the Chern-scalar curvature.
More precisely, take a local chart $(\eU, \xi=(z^1,{\dots},z^m))$ of holomorphic coordinates in an open set of $(M,J)$. If we denote by $\G$ the Christoffel symbol of $\n$ with respect to $(\eU, \xi)$, by using the Koszul Formula for the Chern connection (see e.g. \cite[Sect 2.1]{angella-pediconi} for notation), one can directly check that the only non-vanishing components of $\G$ are
\begin{equation} \label{ChristChern}
\G_{ij}^k = g^{k\bar{r}}g_{\bar{r}j,i} \,\, , \quad \G_{\bar{i}\bar{j}}^{\bar{k}} = \ol{\G_{ij}^k} \,\, .
\end{equation}
Then, if we denote by $\hat{\W}(g) \in \cC^{\infty}(M,\L^{1,1}(T^*M) \otimes \L^{1,1}(T^*M))$ the totally covariant Chern-curvature, by using \eqref{ChristChern} we can write
$$\begin{gathered}
\hat{\W}(g) = \W_{\bar{j}i\bar{\ell}k} \, \ti \diff z^{\bar{j}} \wedge \diff z^{i} \otimes \ti \diff z^{k} \wedge \diff z^{\bar{\ell}} \,\, , \quad \W_{\bar{j}i\bar{\ell}k} = -\hat{\W}\big(\tfrac{\p}{\p z^i},\tfrac{\p}{\p z^{\bar{j}}},\tfrac{\p}{\p z^k},\tfrac{\p}{\p z^{\bar{\ell}}}\big)= -g_{\bar{\ell}k,\bar{j}i}+g^{s\bar{r}}g_{\bar{r}k,i}g_{\bar{\ell}s,\bar{j}} \,\, .
\end{gathered}$$
Therefore, by the very definition of Chern-scalar curvature, we get
\begin{equation} \label{locscal}
\scal^{\rm Ch}(g) = 2g^{k\bar{\ell}}g^{i\bar{j}}\W_{\bar{j}i\bar{\ell}k} = -2g^{k\bar{\ell}}g^{i\bar{j}}g_{\bar{\ell}k,\bar{j}i} +2g^{k\bar{\ell}}g^{i\bar{j}}g^{s\bar{r}}g_{\bar{r}k,i}g_{\bar{\ell}s,\bar{j}} \,\, .
\end{equation}
Finally, notice that the local formula \eqref{locscal} and the multiplicative properties of the Sobolev spaces (see e.g. \cite[Thm 4.39]{adams}) imply that the map $\scal^{\rm Ch} : \eM^{k+2}_{\rm H} \to W^{k,2}(M,\bR)$ is smooth for any $k \geq m+2$.
\end{proof}

For any $g \in \eM_{\rm H}$, we consider the linearized Chern-scalar curvature
$$
\g_g: \cC^{\infty}(M,\Sym^{1,1}(TM)) \to \cC^{\infty}(M,\bR) \,\, , \quad \g_g(h) = \tfrac12 \big(\D_g(\tr^{\bR}h)+g(\diff\,(\tr^{\bR}h),\q)\big) -g(h,S)
$$
given by \eqref{slin} and its $L^2$-adjoint
\begin{equation} \label{eq:gamma*}
\g_g^*: \cC^{\infty}(M,\bR) \to \cC^{\infty}(M,\Sym^{1,1}(TM)) \,\, , \quad \g_g^*(u) = \tfrac12\big(\D_gu-g(\diff u, \q) +(\diff^{\,*}\!\q)\,u\big)\Id -u\,S \,\, .
\end{equation}

For the following result in the Riemannian context, compare \cite[Thm 1]{fischer-marsden}.

\begin{prop} \label{prop:submersion}
Fix an integer $k\geq m+2$, let $g \in \eM^{k+2}_{\rm H}$ and set $\l \= \scal^{\rm Ch}(g) \in W^{k,2}(M,\bR)$. Assume that one of the following is satisfied: \begin{itemize}
\item[i)] $g$ is not first-Chern-Einstein;
\item[ii)] $g$ is first-Chern-Einstein and $\tfrac{\l}m<\diff^{\,*}\!\q$.
\end{itemize}
Then, the map $\scal^{\rm Ch} : \eM^{k+2}_{\rm H} \to W^{k,2}(M,\bR)$ is a submersion at $g$.
\end{prop}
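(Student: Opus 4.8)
The plan is to verify the two defining conditions of a submersion at $g$ —surjectivity of the linearization $(\scal^{\rm Ch})'_g$ and the splitting of its kernel— by invoking the Berger--Ebin Splitting Lemma (Theorem~\ref{thm:BE}) in the packaged form of Remark~\ref{rem:adjP}. First note that, by Proposition~\ref{prop:first-variation-scal} together with the identity $\tr^{\bR}h=2\tr^{\bC}h$, the linearization $(\scal^{\rm Ch})'_g$ coincides with the operator $\g_g$ defined above, extended to $\g_g\colon W^{k+2,2}(M,\Sym^{1,1}(TM))\to W^{k,2}(M,\bR)$. Hence it suffices to prove that its $L^2$-adjoint $\g_g^*$ from \eqref{eq:gamma*} is injective and has injective principal symbol: Remark~\ref{rem:adjP} then gives that $\g_g$ is surjective onto $W^{k,2}(M,\bR)$ and that $\ker(\g_g)$ admits a closed complement in $W^{k+2,2}(M,\Sym^{1,1}(TM))$, which is exactly the submersion property at $g$.

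The symbol is immediate. The leading-order part of $\g_g^*$ is $u\mapsto\tfrac12(\D_gu)\,\Id$, so for $x\in M$, $\xi\in T_x^*M$ and $u\in\bR$ the symbol $\s(\g_g^*)_x(\xi\otimes u)$ is a nonzero multiple of $|\xi|_g^2\,u\,\Id$, which is nonvanishing whenever $\xi\neq0$ and $u\neq0$ since $\Id$ is a nonzero endomorphism. Thus $\g_g^*$ has injective symbol; in particular, by elliptic regularity (e.g. since $\g_g\circ\g_g^*$ is elliptic of order $4$), any $u$ in its kernel is as regular as the coefficients of $\g_g^*$ permit, and for $k\geq m+2$ the Sobolev embedding \eqref{embSob} guarantees $u\in\cC^2$.

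For injectivity, let $u$ satisfy $\g_g^*u=0$. Applying $\tr^{\bC}$ to \eqref{eq:gamma*}, and using $\tr^{\bC}(\Id)=m$ together with $\tr^{\bC}(S(g))=\Tr^{\bC}_g(\tilde{S}(g))=\tfrac12\scal^{\rm Ch}(g)=\tfrac{\l}{2}$, one obtains the scalar equation $\D_gu-g(\diff u,\q)+(\diff^{\,*}\!\q)\,u=\tfrac{\l}{m}\,u$; feeding this back into $\g_g^*u=0$ yields the pointwise identity $u\,\big(S(g)-\tfrac{\l}{2m}\Id\big)=0$. In case i), since $g$ is not first-Chern-Einstein, $S(g)-\tfrac{\l}{2m}\Id$ is not identically zero, hence nonzero on a nonempty open set $U$, on which $u$ vanishes; as $u$ solves the second-order elliptic equation $\D_gu-g(\diff u,\q)+\big(\diff^{\,*}\!\q-\tfrac{\l}{m}\big)u=0$ on the connected manifold $M$ and vanishes on $U$, the unique continuation property for such operators (Aronszajn) forces $u\equiv0$. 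In case ii), $S(g)=\tfrac{\l}{2m}\Id$, so the identity is vacuous and $u$ solves $\D_gu-g(\diff u,\q)+c\,u=0$ with $c\=\diff^{\,*}\!\q-\tfrac{\l}{m}>0$ everywhere; at a point where $u$ attains its maximum one has $\diff u=0$ and $\D_gu\geq0$, so $c\,u\leq0$ there, whence $\max u\leq0$, and symmetrically $\min u\geq0$, so $u\equiv0$. In either case $\g_g^*$ is injective, and the proof concludes.

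The conceptual core is the ``trace trick'' turning the overdetermined equation $\g_g^*u=0$ into the scalar equation above and the pointwise relation $u(S(g)-\tfrac{\l}{2m}\Id)=0$; once this is in hand, case ii) needs only the maximum principle, and the one genuinely nonformal ingredient is the appeal to strong unique continuation in case i). The point requiring care is the regularity bookkeeping, since $g\in\eM^{k+2}_{\rm H}$ is only finitely differentiable: one must check, via \eqref{embSob} and the elliptic estimates for $\g_g^*$, that $\ker(\g_g^*)$ consists of $\cC^2$ functions (and that the coefficients are Lipschitz/bounded as required by Aronszajn's theorem), which is precisely where the hypothesis $k\geq m+2$ is used.
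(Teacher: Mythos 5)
Your proof is correct and follows essentially the same route as the paper: reduce the submersion property to injectivity of $\g_g^*$ (plus its injective symbol) via Remark~\ref{rem:adjP}, extract the scalar equation $\D_gu-g(\diff u,\q)+(\diff^{\,*}\!\q-\tfrac{\l}{m})u=0$ and the pointwise identity $u\,(S(g)-\tfrac{\l}{2m}\Id)=0$, then handle case~i) by a unique-continuation argument and case~ii) by a maximum principle. The only differences are cosmetic: the paper evaluates $\g_g^*(u)$ on a unit vector field rather than tracing, cites Cheeger--Naber--Valtorta instead of Aronszajn for the vanishing/unique-continuation step, and invokes a Strong Maximum Principle reference where you give the elementary two-sided extremum argument.
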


\begin{proof}
The principal symbol of $\g_g^*$ is clearly injective. Therefore, by means of Remark \ref{rem:adjP}, it is sufficient to prove that both (i), (ii) imply that $\g_g^*$ is injective.

Firstly, assume that $g$ is not first-Chern-Einstein. Take $u \in \ker(\g_g^*)$ and a vector field $X \in \cC^{\infty}(M,TM)$ such that $g(X,X)=1$ and set $g(S(g)(X),X)=\tfrac{\phi}{2m}$, for some $\phi \in W^{k,2}(M,\bR)$. Then, by \eqref{eq:gamma*} we get
\begin{equation} \label{eq:gammaulambda}
0 = 2g(\g_g^*(u)X,X) = \D_gu-g(\diff u, \q) +\big((\diff^{\,*}\!\q)-\tfrac{\phi}{m}\big)u\,\, .
\end{equation}
Since $k\geq m+2$, by \eqref{embSob} the coefficients of \eqref{eq:gammaulambda} are of class $\cC^1$. We then apply \cite[Thm 1.17]{cheeger-naber-valtorta}: it follows that $u \equiv 0$ or there exists an open dense subset $\eU \subset M$ such that $u(x) \neq 0$ for any $x \in \eU$ and the complement $M \setminus \eU$ has zero measure. Assume by contradiction that $u \not\equiv 0$. Then, by \eqref{eq:gamma*} and \eqref{eq:gammaulambda}, we obtain
$$
0=\g_g^*(u) = u(-S(g)+\tfrac{\phi}{2m}\Id)
$$
and so
$$
S(g)=\tfrac{\phi}{2m}\Id \quad \text{ for any } x \in \eU \,\, .
$$
In particular, this implies that $\phi = \l$ and that $g$ is first-Chern-Einstein on the whole manifold $M$, which is not possible by assumption. Then, (i) implies that $\g_g^*$ is injective.

On the other hand, assume that $S(g) = \tfrac{\l}{2m}\Id$. Then, the equation $\g_g^*(u)=0$ is equivalent to
$$
\D_gu-g(\diff u, \q) +((\diff^{\,*}\!\q)-\tfrac{\l}{m})u = 0 \,\, .
$$
If (ii) holds true, then the Strong Maximum Principle (see e.g. \cite[Thm 4.2]{goffi-pediconi}) implies that $u \equiv 0$. \end{proof}

From Proposition \ref{prop:submersion}, we get the proofs of Theorem \ref{thm:a} and Theorem \ref{thm:b}.

\begin{proof}[Proof of Theorem \ref{thm:a}]
Let $g \in \eM_{\rm H}$, set $\l \= \scal^{\rm Ch}(g) \in \cC^{\infty}(M,\bR)$ and assume that either $g$ is not first-Chern-Einstein, or $g$ is first-Chern-Einstein with $\tfrac{\l}m<\diff^{\,*}\!\q$. Thanks to Proposition \ref{prop:submersion} and the Implicit Function Theorem, we know that, for any integer $k \geq m+2$, the extension of $\scal^{\rm Ch}$ on $\eM_{\rm H}^{k+2}$ is linearization stable at $g$ and maps any neighborhood of $g$ in $\eM^{k+2}_{\rm H}$ onto a neighborhood of $\l$ in $W^{k,2}(M,\bR)$. Moreover, by means of Theorem \ref{thm:BE},
$$
W^{k,2}(M,\Sym^{1,1}(TM)) = \ker(\g_g) \oplus \im(\g_g^*) \,\, .
$$
It remains to prove the following claim: if $h \in \im(\g_g^*) \subset W^{k,2}(M,\Sym^{1,1}(TM))$ and $g_t := g((\Id+th)\,\cdot,\cdot\cdot)$, then $\scal^{\rm Ch}(g_t) \in \cC^{\infty}(M,\bR)$ only if $h \in \cC^{\infty}(M,\Sym^{1,1}(TM))$.

So, take $h \in \im(\g_g^*) \subset W^{k,2}(M,\Sym^{1,1}(TM))$ and assume that $\scal^{\rm Ch}(g_t) \in \cC^{\infty}(M,\bR)$. By diffe\-rentia\-ting with respect to $t$, it follows that $\g_g(h) \in \cC^{\infty}(M,\bR)$. By hypothesis, there exists $u \in W^{k+2,2}(M,\bR)$ such that $h=\g_g^*(u)$. Since $\g_g\g_g^*$ has injective symbol and $\g_g\g_g^*(u) \in \cC^{\infty}(M,\bR)$, it follows that $u \in \cC^{\infty}(M,\bR)$. Therefore $h \in \cC^{\infty}(M,\Sym^{1,1}(TM))$ and the claim follows.
\end{proof}

\begin{proof}[Proof of Theorem \ref{thm:b}]
Let us observe that, if one of the conditions stated in Theorem \ref{thm:b} is satisfied, then $(M,J)$ does not admit any smooth first-Chern-Einstein metric (see \cite[Thm 5]{angella-calamai-spotti-2}).
Take $g\in\eM_{\rm H}(\l)$.
Then, for any $k\geq m+2$, by means of Proposition \ref{prop:submersion} and the Implicit Function Theorem, the preimage $(\scal^{\rm Ch})^{-1}(\l)$ inside $\eM_{\rm H}^{k+2}$ is a smooth submanifold in a neighborhood of $g$ with tangent space at $g$ given by $\ker(\g_g) \subset W^{k+2,2}(M,\Sym^{1,1}(TM))$. This gives rise to a structure of smooth ILH-submanifold on the space $\eM_{\rm H}(\l) = \{g \in \eM_{\rm H} : \scal^{\rm Ch}(g)=\l\}$ inside $\eM_{\rm H}$.
\end{proof}

\begin{rem}\label{rmk:non-empty}
First, we recall that, by \cite[Thm 1]{gauduchon-excentricite}, in any conformal class of Hermitian metrics, there is a unique {\em Gauduchon metric} with volume $1$, where being Gauduchon means that the associated $(1,1)$-form $\h$ satisfies $\diff\diff^{\,c}\eta^{m-1}=0$. The {\em Gauduchon degree} of the conformal class $\{\eta\}$ is then defined as
$$ \G(\{\eta\}) := \int_M c_1^{\mathsmaller{\rm BC}}(K_M^{-1})\wedge \eta^{m-1} = \int_M \scal^{\rm Ch}(\eta) \,\eta^m \,\, , $$
and it is equal to the volume of the divisor associated to
any meromorphic section of the anti-canonical line bundle $K_M^{-1}$ by means of the Gauduchon metric, see \cite[Sect I.17]{gauduchon-mathann}.
We can now recall the following known facts concerning the condition for $\eM_{\rm H}(\l)$ to be non-empty.
\begin{itemize}
\item[$\bcdot$] If there exists a conformal class $\{\eta\}$ on $(M,J)$ with $\G(\{\eta\}) = 0$, then $\eM_{\rm H}(\l)$ is non-empty for $\lambda = 0$ constant (see \cite[Thm 3.1]{angella-calamai-spotti}). Moreover, if $\eta$ is balanced (that is, $\diff\eta^{m-1}=0$) with $\scal^{\rm Ch}(\eta) = 0$, then $\eM_{\rm H}(\l)$ is non-empty if $\lambda\in\cC^{\infty}(M,\bR)$ changes sign and $\int_M \lambda \, \eta^m<0$ (see \cite[Thm 2.11]{fusi}).
\item[$\bcdot$] If there exists a conformal class $\{\eta\}$ on $(M,J)$ with $\G(\{\eta\})<0$, then $\eM_{\rm H}(\l)$ is non-empty for any negative constant $\lambda<0$ (see \cite[Thm 4.1]{angella-calamai-spotti}) and for any non-identically zero $\lambda \in \cC^{\infty}(M,\bR)$ such that $\lambda\leq0$ (see \cite[Thm 2.5]{fusi}). This happens, in particular, when the Kodaira dimension ${\rm Kod}(M,J)$ is positive (see \cite[Sect I.17]{gauduchon-mathann}).
\item[$\bcdot$] If $M=N \times \Sigma$, where $N$ is a compact complex manifold  admitting a conformal class $\{\eta\}$ with $\G(\{\eta\})>0$ and $\Sigma$ is a compact Riemann surface with $\chi(\S)<0$, then $\eM_{\rm H}(\l)$ is non-empty for any positive constant $\lambda>0$ (see \cite[Prop 5.7]{angella-calamai-spotti}).
\item[$\bcdot$] If the Chern-Yamabe conjecture \cite[Conj 2.1]{angella-calamai-spotti} has an affirmative answer, then $\eM_{\rm H}(\l)$ is non-empty for $\lambda \in I \subset \bR$ constant as follows: $I = \bR$ when neither $K_M$ nor $K_M^{-1}$ is pseudo-effective; $I = (0,+\infty)$ when $K_M^{-1}$ is pseudo-effective and non-unitary flat; $I = (-\infty,0)$ when $K_M$ is pseudo-effective and non-unitary flat; $I =\{0\}$ when $K_M$ is unitary flat (see \cite{teleman}, \cite[Thms 1.1, 3.4]{yang}).
\item[$\bcdot$] Further examples of compact manifolds admitting Hermitian metrics with positive constant Chern-scalar curvature are given by the Hopf surface \cite{gauduchon-ivanov}, the homogeneous non-K\"ahler ${\rm C}$-spaces \cite{podesta}, the Hirzebruch surfaces \cite{koca-lejmi}, the B\'erard-Bergery standard cohomogeneity one complex manifolds \cite{angella-pediconi}.
\end{itemize}
\end{rem}

\subsection{Linearization instability and infinitesimal isometries} \hfill \par

We conclude by providing an example of linearization instability. To this aim, we first need the following result due to Fischer-Marsden. For the convenience of the reader, we recall the argument here below.

\begin{lemma}[{\cite[Lemma 7.1]{fischer-marsden}}]
Fix $g_{\zero} \in \eM_{\rm H}$. If $\scal^{\rm Ch}$ is linearization stable at $g_{\zero}$, then
\begin{equation} \label{eq:stab2}
\int_M u\,(\scal^{\rm Ch})_{g_{\zero}}''(h,h)\, \nu_{g_{\zero}} = 0
\end{equation}
for any $h \in \ker(\g_{g_{\zero}})$, for any $u \in \ker(\g_{g_{\zero}}^*)$.
\end{lemma}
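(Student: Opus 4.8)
The plan is to mimic the classical Fischer--Marsden argument: exploit the fact that linearization stability at $g_{\zero}$ forces any element $h\in\ker(\g_{g_{\zero}})$ to be the initial velocity of a genuine curve of metrics with constant Chern-scalar curvature, and then extract a second-order obstruction by pairing against elements $u\in\ker(\g_{g_{\zero}}^*)$.

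\medskip

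\noindent\emph{Proof.} Let $h\in\ker(\g_{g_{\zero}})$ and $u\in\ker(\g_{g_{\zero}}^*)$. By the hypothesis of linearization stability, there is a smooth path $g:(-\epsilon,\epsilon)\to\eM_{\rm H}$ with $g(0)=g_{\zero}$, $\dot g(0)=h$ and $\scal^{\rm Ch}(g(t))\equiv\scal^{\rm Ch}(g_{\zero})$ for all $t$. Write $g(t)=g_{\zero}((\Id+H(t))\,\cdot,\cdot\cdot)$ with $H(0)=0$, $\dot H(0)=h$, and denote $k\=\ddot H(0)\in\cC^\infty(M,\Sym^{1,1}(TM))$. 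Differentiating the identity $\scal^{\rm Ch}(g(t))=\scal^{\rm Ch}(g_{\zero})$ twice at $t=0$ and using the chain rule for the second derivative of a smooth map between the relevant spaces gives
$$
(\scal^{\rm Ch})'_{g_{\zero}}(k) + (\scal^{\rm Ch})''_{g_{\zero}}(h,h) = 0 \,\, ,
$$
i.e. $(\scal^{\rm Ch})''_{g_{\zero}}(h,h) = -2\,\g_{g_{\zero}}(k)$, since $(\scal^{\rm Ch})'_{g_{\zero}}=2\,\g_{g_{\zero}}$ by the definition of $\g_{g_{\zero}}$ and Proposition~\ref{prop:first-variation-scal}. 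Now multiply by $u$, integrate over $M$ with respect to $\nu_{g_{\zero}}$, and use the fact that $u\in\ker(\g_{g_{\zero}}^*)$ together with the defining adjoint relation $\langle\g_{g_{\zero}}(k),u\rangle_{L^2}=\langle k,\g_{g_{\zero}}^*(u)\rangle_{L^2}$ to obtain
$$
\int_M u\,(\scal^{\rm Ch})''_{g_{\zero}}(h,h)\,\nu_{g_{\zero}} = -2\int_M u\,\g_{g_{\zero}}(k)\,\nu_{g_{\zero}} = -2\int_M k\cdot\g_{g_{\zero}}^*(u)\,\nu_{g_{\zero}} = 0 \,\, ,
$$
which is \eqref{eq:stab2}. $\square$

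\medskip

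\noindent The step requiring the most care is the twice-differentiation of $t\mapsto\scal^{\rm Ch}(g(t))$ and the identification of its second derivative with $(\scal^{\rm Ch})'_{g_{\zero}}(\ddot H(0))+(\scal^{\rm Ch})''_{g_{\zero}}(\dot H(0),\dot H(0))$. One must either work in the Banach/Sobolev completions $\eM_{\rm H}^{k+2}$, where $\scal^{\rm Ch}$ is a genuinely $\cC^2$ map by the smoothness lemma, and then note that the path $g(t)$ consists of smooth metrics so all objects are smooth, or appeal directly to the definition of $\diff^{\,(2)}$ via repeated difference quotients. The point is that the curve $g(t)$ need not be of the special affine form $g_{\zero}((\Id+th)\,\cdot,\cdot\cdot)$ used to derive \eqref{slin2}; the extra term $\g_{g_{\zero}}(\ddot H(0))$ accounts precisely for the acceleration of a general curve, and it is annihilated after pairing with $u\in\ker(\g_{g_{\zero}}^*)$, which is exactly why the obstruction involves only $\ker(\g_{g_{\zero}})$ and $\ker(\g_{g_{\zero}}^*)$. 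Aside from this, the argument is the standard pairing trick and requires no new input.
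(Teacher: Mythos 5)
Your argument is the same as the paper's: differentiate $\scal^{\rm Ch}(g(t))=\scal^{\rm Ch}(g_{\zero})$ twice at $t=0$, identify the second derivative as $(\scal^{\rm Ch})''_{g_{\zero}}(h,h)+(\scal^{\rm Ch})'_{g_{\zero}}(\ddot g(0))$, and kill the acceleration term by pairing with $u\in\ker(\g_{g_{\zero}}^*)$ via the $L^2$-adjoint relation. The only slip is the normalization $(\scal^{\rm Ch})'_{g_{\zero}}=2\,\g_{g_{\zero}}$ --- in the paper's conventions $\g_{g_{\zero}}$ \emph{is} $(\scal^{\rm Ch})'_{g_{\zero}}$ (note $\tr^{\bR}h=2\tr^{\bC}h$) --- but this constant is irrelevant to the conclusion.
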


\begin{proof}
Fix $h \in \ker(\g_{g_{\zero}})$ and $u \in \ker(\g_{g_{\zero}}^*)$. Since $\scal^{\rm Ch}$ is linearization stable at $g_{\zero}$, there exists a smooth path $g : (-\epsilon,\epsilon) \to \eM_{\rm H}$ such that $g(0)=g_{\zero}$, $\dot{g}(0)=h$ and $\scal^{\rm Ch}(g(t))=\scal^{\rm Ch}(g_{\zero})$ for any $-\epsilon<t<\epsilon$. Differentiating this last equation twice, we get
$$
0 = \tfrac{\diff^2}{\diff t^2}\scal^{\rm Ch}(g(t))\big|_{t=0} = (\scal^{\rm Ch})_g''(h,h) + (\scal^{\rm Ch})_g'(g''(0))
$$
and so, since $\g_{g_{\zero}}^*(u)$, we get
$$\begin{aligned}
0 &= \int_M u\,(\scal^{\rm Ch})_g''(h,h)\, \nu_{g_{\zero}} + \int_M u\,(\scal^{\rm Ch})_g'(g''(0))\, \nu_{g_{\zero}} \\
&= \int_M u\,(\scal^{\rm Ch})_g''(h,h)\, \nu_{g_{\zero}} + \langle u, \g_{g_{\zero}}(g''(0)) \rangle_{L^2} \\
&= \int_M u\,(\scal^{\rm Ch})_g''(h,h)\, \nu_{g_{\zero}} \,\,
\end{aligned}$$
which concludes the proof.
\end{proof}

In the Theorem \ref{thm:c}, we consider a K\"ahler-Einstein metric with positive scalar curvature. Notice that K\"ahler-Einstein metrics with negative scalar curvature do satisfy the hypothesis of Theorem \ref{thm:a}, and so they are necessarily linearization stable.

\begin{proof}[Proof of Theorem \ref{thm:c}]
Let $\l_{\zero}\=\scal(g_{\zero})>0$ be the scalar curvature of $g_{\zero}$.
By \eqref{slin}, \eqref{eq:gamma*}, \eqref{slin2K} and by hypothesis, we have
$$\begin{gathered}
\g_{g_{\zero}}(h) = \tfrac12 \big(\D_{g_{\zero}}(\tr^{\bR}h) -\tfrac{\l_{\zero}}{m}(\tr^{\bR}h)\big) \,\, , \quad \g_{g_{\zero}}^*(u) = \tfrac12 \big(\D_{g_{\zero}}u -\tfrac{\l_{\zero}}{m}u\big)\!\Id \,\, , \\
(\scal^{\rm Ch})_{g_{\zero}}''(h,h) = g_{\zero}(\Hess_{g_{\zero}}(\tr^{\bR}h),h) +\tfrac{\l_{\zero}}{2m}|h|_{g_{\zero}}^2 \,\, .
\end{gathered}$$
It is known that the vector space of Killing vector fields is isomorphic to the space of smooth functions $f$ such that $\D_{g_{\zero}}f=\frac{\l_{\zero}}{m}f$, see e.g. \cite[p 96]{kobayashi-transf}.
Therefore, by hypothesis, $\tfrac{\l_{\zero}}{m}$ is an eigenvalue of the Laplace-Beltrami operator $\D_{g_{\zero}}$, so that $\ker(\g_{g_{\zero}}^*) \neq \{0\}$. Assume also by contradiction that $\scal^{\rm Ch}$ is linearization stable at $g_{\zero}$. Then, by means of \eqref{eq:stab2}, it follows that
\begin{equation} \label{contradictionstab}
\int_M u\,|h|_{g_{\zero}}^2\, \nu_{g_{\zero}} = 0 \quad \text{ for any } u \in \ker(\g_{g_{\zero}}^*) \,\, , \,\,\, h \in \cC^{\infty}(M,\Sym^{1,1}_0(TM)) \,\, ,
\end{equation}
where $\Sym^{1,1}_0(TM)$ denotes the subbundle of elements in $\Sym^{1,1}(TM)$ with zero trace. Fix $u \in \cC^{\infty}(M,\bR)$ such that $\D_{g_{\zero}}u =\tfrac{\l_{\zero}}{m}u$ and $u \not\equiv 0$. By means of the Stokes' Theorem, it follows that $$\int_M u\, \nu_{g_{\zero}} = 0$$ and so there exists an open ball $B \subset M$ such that $u(x)>0$ for any $x \in B$. Fix $h \in \cC^{\infty}(M,\Sym^{1,1}_0(TM))$ and, up to shrinking $B$, assume that $|h|_{g_{\zero}}^2>0$ on the whole $B$. Pick a smaller ball $B' \subset B$ and a function $\psi \in \cC^{\infty}(M,\bR)$ with ${\rm supp}(\psi) \subset B$ and such that $\psi(x) =1$ for any $x \in B'$. Then, since $\tr^{\bR}(\psi h) = 0$, by \eqref{contradictionstab} we get
$$
0 = \int_M u\,|\psi h|_{g_{\zero}}^2\, \nu_{g_{\zero}} \geq \int_{B'} u\, |h|_{g_{\zero}}^2\,\nu_{g_{\zero}} >0 \,\, ,
$$
which is not possible.
\end{proof}

%\begin{rem}\label{rmk:proj}
%We recall that the spectrum of the Laplacian of $(\bC \bP^m, g_{\rm \mathsmaller{FS}})$, where $g_{\rm \mathsmaller{FS}}$ is the Fubini-Study metric with sectional curvature $1 \leq \sec(g_{\rm \mathsmaller{FS}}) \leq 4$, is $\{ 4k(m+k) : k \in \bZ, k \geq 0\}$, see e.g. \cite[Prop C.III.1]{berger-gauduchon-mazet}. In particular, its first non-zero eigenvalue is $4(m+1)$ and so, since $\scal(g_{\rm \mathsmaller{FS}})=4m(m+1)$, Theorem \ref{thm:c} applies.
%\end{rem}

\end{document}